\theoremstyle{plain}
\newtheorem{theorem}{Theorem}[section]
\newtheorem{lemma}[theorem]{Lemma}
\newtheorem{corollary}[theorem]{Corollary}
\theoremstyle{definition}
\newtheorem{remark}[theorem]{Remark}
\newtheorem{example}[theorem]{Example}
\newcommand{\Zee}{\mathbb{Z}}
\newcommand{\Que}{{\mathbb Q}}
\newcommand{\Oh}{{\mathbb O}}
\newcommand{\fB}{\mathcal{B}}
\newcommand{\fE}{\mathcal{E}}
\newcommand{\fH}{\mathcal{H}}
\newcommand{\fN}{\mathcal{N}}
\newcommand{\fU}{\mathcal{U}}
\newcommand{\ffn}{\mathfrak{n}}
\newcommand{\ffs}{\mathfrak{s}}
\newcommand{\alp}{\alpha}
\newcommand{\sig}{\sigma}
\newcommand{\vpi}{\varpi}
\newcommand{\HS}{\mathcal{HS}}
\newcommand{\topu}{\mathcal{T}_u}
\newcommand{\ctopu}{\overline{\mathcal{T}}_u}
\newcommand{\ze}{\mathrm{ZE}}
\newcommand{\ee}{\mathrm{E}}
\newcommand{\wbar}[1]{\overline{#1}}
\newcommand{\what}[1]{\widehat{#1}}
\newcommand{\wtil}[1]{\widetilde{#1}}
\newcommand{\am}{\mathrm{AM}}
\newcommand{\fal}{\mathrm{A}}
\newcommand{\fsal}{\mathrm{B}}
\newcommand{\bl}{\mathrm{L}}
\newcommand{\bm}{\mathrm{M}}
\newcommand{\ideal}{\mathrm{I}}
\newcommand{\pos}{\mathrm{P}}
\newcommand{\wstar}{\mathrm{W}^*}
\begin{document}

\author{Nico Spronk}
\address{Dept.\ of Pure Mathematics, University of Waterloo, Waterloo, Ontario, N2L 3G1, Canada}
\email{nspronk@uwaterloo.ca}

\title[Operator amenability of $\fsal(G)$]{On operator amenability of Fourier-Stieltjes algebras}

\begin{abstract}
We consider the Fourier-Stietljes algebra $\fsal(G)$ of a locally compact group $G$.
We show that operator amenablility of $\fsal(G)$ implies that a certain 
semitolpological compactification of $G$
admits only finitely many idempotents.  In the case that $G$ is connected, we show that
operator amenability of $\fsal(G)$ entails that $G$ is compact.
\end{abstract}

\subjclass{Primary 43A30; Secondary 46J40, 46L07, 43A07}

\thanks{The author was partially supported by an NSERC Discovery Grant.}


 \date{\today}
 
 \maketitle
 

\subsection{History and context}
Let $G$ be a locally compact group.  The Fourier-Steiltjes and Fourier algebras, $\fsal(G)$ and
$\fal(G)$ are defined by Eymard~\cite{eymard}, to act as dual objects of the measure 
and group algebras, $\bm(G)$ and $\bl^1(G)$, in a sense generalizing Pontryagin duality
from the theory of abelian locally compact groups.  Hence there is a natural expectation
that properties of the latter two algebras ought to be reflected in the former.

For example, consider the following comparison of certain amenability properties.

\begin{theorem}\label{theo:johruan}
The following are equivalent:
\begin{itemize} 
\item[(i)] the locally compact group $G$ is amenable;
\item[(ii)] {\rm (Johnson~\cite{johnson})} the (completely contractive) 
Banach algebra $\bl^1(G)$ is (operator) amenable;
\item[(iii)] {\rm (Ruan~\cite{ruan})} the completley contractive Banach algebra
$\fal(G)$ is operator amenable. 
\end{itemize}
\end{theorem}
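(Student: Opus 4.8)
The natural strategy is to route both nontrivial equivalences through condition (i), treating amenability of $G$ as the pivot, and to recall the diagonal characterization of (operator) amenability: a completely contractive Banach algebra $A$ is operator amenable precisely when it admits an operator virtual diagonal, namely an $M\in(A\widehat{\otimes}A)^{**}$ with $a\cdot M=M\cdot a$ and $\pi^{**}(M)\cdot a=a$ for all $a\in A$, where $\widehat{\otimes}$ denotes the operator-space projective tensor product and $\pi\colon A\widehat{\otimes}A\to A$ is multiplication. The classical Banach notion is characterized identically using the ordinary projective tensor product.

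First I would establish Johnson's equivalence (i)$\Leftrightarrow$(ii). For (i)$\Rightarrow$(ii), starting from a left invariant mean on $\bl^\infty(G)$ together with a bounded approximate identity for $\bl^1(G)$, I would average translates of the approximate identity against the mean to manufacture a bounded approximate diagonal in $\bl^1(G)\widehat{\otimes}\bl^1(G)$, whose weak$^*$ limit furnishes the virtual diagonal. For (ii)$\Rightarrow$(i) I would run this in reverse: pairing a bounded approximate diagonal against the canonical module actions and restricting to functions on $G$ produces a net of means on $\bl^\infty(G)$ that is asymptotically translation invariant, so that any weak$^*$ cluster point is an invariant mean and $G$ is amenable.

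Next I would treat Ruan's equivalence (i)$\Leftrightarrow$(iii). The easy direction is (iii)$\Rightarrow$(i): operator amenability forces the existence of a bounded approximate identity in $\fal(G)$, and Leptin's theorem—amenability of $G$ is equivalent to $\fal(G)$ having a bounded approximate identity—then yields (i). The substantive direction is (i)$\Rightarrow$(iii), and here I expect the main obstacle to lie, since $\fal(G)$ is amenable in the plain Banach sense only for very small $G$, so the operator-space structure is indispensable. The plan is to exploit the identifications $\fal(G)^*=\mathrm{VN}(G)$ and $(\fal(G)\widehat{\otimes}\fal(G))^*=\mathrm{VN}(G)\overline{\otimes}\mathrm{VN}(G)$, under which a virtual diagonal is dual to a completely bounded $\mathrm{VN}(G)$-bimodule map; Leptin's bounded approximate identity, transported through the coproduct on $\mathrm{VN}(G)$ that implements multiplication in $\fal(G)$, should produce a bounded net in $\fal(G)\widehat{\otimes}\fal(G)$ whose cluster point is the required operator virtual diagonal. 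The delicate points are checking that the relevant bounds are completely bounded rather than merely bounded, and that the two module actions coincide in the limit; this is exactly where amenability of $G$, via the bounded approximate identity, is consumed.
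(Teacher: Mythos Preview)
The paper does not prove this theorem at all: it is stated in the introductory ``History and context'' subsection as background, with the two nontrivial directions attributed by citation to Johnson and to Ruan respectively, and no argument is given. So there is no ``paper's own proof'' to compare your proposal against; you have in effect sketched the referenced literature rather than anything the present paper contributes.

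That said, as a sketch of the classical arguments your outline is reasonable for (i)$\Leftrightarrow$(ii) and for (iii)$\Rightarrow$(i) via Leptin. The one place where your plan is genuinely thin is the substantive direction (i)$\Rightarrow$(iii). You describe ``transporting Leptin's bounded approximate identity through the coproduct on $\mathrm{VN}(G)$'' and hoping the bounds are completely bounded, but you never invoke the identification that actually makes Ruan's argument go: for the \emph{operator} projective tensor product one has a completely isometric algebra isomorphism $\fal(G)\what{\otimes}\fal(G)\cong\fal(G\times G)$, dually $\mathrm{VN}(G)\overline{\otimes}\mathrm{VN}(G)\cong\mathrm{VN}(G\times G)$. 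This is precisely the step where the operator space structure is indispensable (the analogous statement fails for the Banach projective tensor product), and it is what lets one build the approximate diagonal inside $\fal(G\times G)$ from a bounded approximate identity for $\fal(G)$ together with restriction to the diagonal subgroup. Without naming this identification, your sketch of (i)$\Rightarrow$(iii) is a hope rather than a strategy: ``check the bounds are completely bounded'' is exactly the issue, and the isomorphism with $\fal(G\times G)$ is how one checks it.
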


We shall only note in passing that since $\bl^1(G)$ is naturally a maximal operator space, the relevant
properties discussed above are naturally ones of operator spaces.  For $\fal(G)$, this is not the case.
Forrest and Runde~\cite{forrestr} established that $\fal(G)$ is amenable only if $G$ is virtually abelian;
while the easier converse implication was noted by Lau, Loy and Willis~\cite{laulw}.  
Weak amenability of $\fal(G)$ is almost, but not completly, understood: see \cite{forrestr} and \cite{leelss}.
Operator weak amenability is completely understood:  thanks to Johnson~\cite{johnson1}, respectively
the second named author~\cite{spronk1}, and independently Samei \cite{samei}, we have
that both $\bl^1(G)$ and $\fal(G)$ are always operator weakly amenable.

Turning to the more formidable
measure algebras, we have the following.

\begin{theorem}\label{theo:dalsgh}
{\rm (Dales, Ghahramani and Helemski\u{\i}~\cite{dalesgh})}
For a locally compact group $G$ and the (completley contractive) Banach algebra $\bm(G)$ we have that:
\begin{itemize} 
\item[(i)] $\bm(G)$  is (operator) weakly amenable if and only if $G$ is discrete,
and even admits a point derivation when $G$ is non-discrete; and
\item[(ii)] $\bm(G)$ is (operator) amenable if and only if $G$ is discrete and amenable.
\end{itemize}
\end{theorem}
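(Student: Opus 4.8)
The plan is to let (ii) rest entirely on (i) together with Theorem~\ref{theo:johruan}, so that the substantive work is the weak-amenability dichotomy of (i), and within it the construction of a point derivation when $G$ is non-discrete.

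\emph{Reducing (ii) to (i).} Since (operator) amenability implies (operator) weak amenability, if $\bm(G)$ is amenable then it is weakly amenable, so by (i) the group $G$ is discrete; but then $\bm(G)=\ell^1(G)=\bl^1(G)$, and Theorem~\ref{theo:johruan} forces $G$ to be amenable. The converse is immediate: if $G$ is discrete and amenable then $\bm(G)=\bl^1(G)$ is (operator) amenable, again by Theorem~\ref{theo:johruan}. I would run the operator and classical versions in parallel, using that on a discrete group $\ell^1(G)$ carries its maximal operator space structure, for which Theorem~\ref{theo:johruan} already records the equivalence of operator amenability, amenability, and amenability of $G$.

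\emph{The weak-amenability statement (i).} The discrete case is the easy half: there $\bm(G)=\ell^1(G)=\bl^1(G)$ is (operator) weakly amenable by Johnson~\cite{johnson1}. For the converse I would show that a non-discrete $G$ yields a nonzero continuous point derivation on $\bm(G)$. This settles everything at once: a scalar point derivation is automatically a completely bounded functional, so its existence precludes both weak amenability and operator weak amenability (hence also amenability). Concretely, for a commutative Banach algebra a continuous point derivation at a character $\varphi$ is a nonzero functional annihilating $\Cee\delta_e$ and $\overline{(\ker\varphi)^2}$, and such a functional exists exactly when $\overline{(\ker\varphi)^2}\neq\ker\varphi$; so the goal is to produce a character $\varphi$ of $\bm(G)$ whose kernel strictly contains the closure of its square.

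\emph{Producing the point derivation.} Non-discreteness of $G$ guarantees a continuous (non-atomic) probability measure $\mu$; the closed unital subalgebra it generates is commutative, and I would seek the point derivation there by differentiating a one-parameter analytic family $t\mapsto\varphi_t$ of characters through $\varphi_0$, so that $\frac{d}{dt}\big|_{t=0}\varphi_t$ is the required derivation. The main obstacle is to carry this out for an \emph{arbitrary} non-discrete group rather than an abelian or Lie one, and then to transfer the derivation from the singly-generated subalgebra to all of $\bm(G)$ --- that is, to extend $\varphi_0$ to a character of the genuinely noncommutative algebra $\bm(G)$ and to extend the derivation compatibly. I expect to resolve the transfer by a Hahn--Banach extension argument that respects the band ($L$-space) decomposition of $\bm(G)$ into mutually singular parts, and to reach the analytic family either along a one-parameter subgroup when the identity component is nontrivial, or from the continuous measures carried by arbitrarily small compact open subgroups when $G$ is totally disconnected and non-discrete; in both cases the analytic dependence of $\varphi_t$ on $t$, and the resulting non-vanishing of the derivative, is the delicate technical point.
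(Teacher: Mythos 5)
First, note that the paper offers no proof of this statement at all: it is quoted verbatim from Dales, Ghahramani and Helemski\u{\i}~\cite{dalesgh} as background, so there is no internal argument to compare yours against. Your top-level architecture is sound and matches how the result is actually organized in the literature: (ii) does reduce to (i) via ``amenable $\Rightarrow$ weakly amenable'' together with Theorem~\ref{theo:johruan}, the discrete half of (i) is exactly Johnson's weak amenability of $\bl^1(G)=\ell^1(G)$~\cite{johnson1}, and the scalar-valued point derivation does simultaneously kill weak amenability and operator weak amenability since every bounded functional is completely bounded.

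The genuine gap is that the entire substance of the theorem --- producing a nonzero continuous point derivation on $\bm(G)$ for an arbitrary non-discrete $G$ --- is left as a plan whose proposed mechanism is not the one that works and whose feasibility you do not establish. The known construction does not differentiate an analytic one-parameter family of characters through a one-parameter subgroup; already for $G=\mathbb{T}$ the relevant characters of $\bm(\mathbb{T})$ live far outside $\what{G}$ (the Wiener--Pitt phenomenon), and there is no analytic curve of characters in sight. What Dales--Ghahramani--Helemski\u{\i} actually use is the existence, for every non-discrete locally compact $G$, of a continuous probability measure $\mu$ whose convolution powers $\mu^{\ast n}$, $n\geq 1$, are mutually singular (a hard theorem in its own right, resting on the construction of independent perfect sets in $G$). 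The closed span of these powers is then an $\ell^1$-direct sum, band-complemented in $\bm(G)$, on which the character $\sum a_n\mu^{\ast n}\mapsto\sum a_n$ and the derivation $\sum a_n\mu^{\ast n}\mapsto\sum n\,a_n$ (suitably normalized) are defined by hand and extended to $\bm(G)$ using the $L$-space structure. Your sketch names neither the independent-power measure nor any substitute for it, and the ``delicate technical point'' you defer --- non-vanishing of the derivative of your hypothetical family $\varphi_t$ --- is precisely where the argument would have to live. As written, the hard direction of (i), and hence of (ii), is not proved.
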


Like with $\bl^1(G)$, $\bm(G)$ is naturally a maximal operator space.  
Ghandehari~\cite[Theo.\ 5.2]{ghandehari}
noted that $\fsal(G)$ is amenable if and only if $G$ is compact and virtually abelian.
The considerations of Pontryagin duality, and Theorems \ref{theo:johruan} and \ref{theo:dalsgh}, at first blush,
suggest that 
\[
\mbox{{\it $\fsal(G)$ is operator amenable if and only if $G$ is compact.}}\tag{$\dagger$}
\]
This suggestion is false, in general.

\begin{theorem}\label{theo:rundes}
{\rm (Runde and S. \cite{rundes1})}
Each of the groups $G_p=\Que_p\rtimes\Oh_p^\times$ ($p$-adic numbers being acted upon by 
the multiplicative subgroup of $p$-adic integers) satisfy that
\begin{itemize} 
\item[(i)] $\fsal(G_p)$ is operator amenable; and
\item[(ii)] $\fsal(G_p)$ is weakly amenable.
\end{itemize}
\end{theorem}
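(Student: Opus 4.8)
The plan is to exploit the very simple structure of $G_p$ as a semidirect product $N\rtimes K$ with $N=\Que_p$ and $K=\Oh_p^\times$, both abelian, and to realise $\fsal(G_p)$ as an extension of two operator amenable algebras. Since $K$ is compact and $N$ is abelian, $G_p$ is amenable, so by Ruan's theorem (Theorem~\ref{theo:johruan}) both $\fal(G_p)$ and $\fal(\Oh_p^\times)$ are operator amenable; the argument is arranged so that, apart from a structural computation, only these two facts together with the hereditary behaviour of operator amenability under extensions are needed. First I would carry out the Mackey/crossed-product analysis of $\mathrm{C}^*(G_p)\cong C_0(\widehat N)\rtimes K$. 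Identifying $\widehat N\cong\Que_p$, the dual action of $K$ has orbits the fixed point $\{0\}$ together with the clopen spheres $S_n=\{y:|y|_p=p^{n}\}$, $n\in\Zee$, on each of which $K$ acts freely and transitively, so the imprimitivity theorem gives a copy of the compact operators for each sphere. This yields a short exact sequence
\[
0\longrightarrow c_0\text{-}\bigoplus_{n\in\Zee}\mathcal{K}\bigl(L^2(K)\bigr)\longrightarrow \mathrm{C}^*(G_p)\longrightarrow \mathrm{C}^*(\Oh_p^\times)\longrightarrow 0,
\]
the ideal being the part supported on $\widehat N\setminus\{0\}$ and the quotient the part supported on the fixed point.

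Next I would transfer this to the dual algebra $\fsal(G_p)=\mathrm{C}^*(G_p)^*$. The point is that the displayed ideal is exactly the reduced part of $\mathrm{C}^*(G_p)$: by Plancherel the regular representation is supported on the spheres and gives no mass to the zero orbit. Passing to the enveloping von Neumann algebra $\wstar(G_p)$, the weak$^*$ closure of the ideal is $z\,\wstar(G_p)$ for the central projection $z=z_{\mathrm{reg}}$ attached to $\lambda$, so that $\fal(G_p)=z\,\fsal(G_p)$ while the complementary summand is $\mathrm{C}^*(\Oh_p^\times)^*=\fsal(\Oh_p^\times)$. Since $\fal(G_p)$ is always a closed ideal of $\fsal(G_p)$, this should produce a short exact sequence of completely contractive Banach algebras
\[
0\longrightarrow \fal(G_p)\longrightarrow \fsal(G_p)\longrightarrow \fsal(\Oh_p^\times)\longrightarrow 0,
\]
split by the inflation map $\fsal(\Oh_p^\times)\to\fsal(G_p)$ dual to $G_p\to K$.

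The hard part will be this second step, namely proving rigorously that the quotient $\fsal(G_p)/\fal(G_p)$ is \emph{as a completely contractive Banach algebra} the algebra $\fsal(\Oh_p^\times)$, and not merely a Banach space of the right size. The delicate issue is that, $G_p$ being amenable, $\lambda$ \emph{weakly} contains the characters of $K$, so the decomposition cannot be read off from weak containment; one must instead show that the zero orbit is quasi-disjoint from $\lambda$ (Plancherel-null), so that $z_{\mathrm{reg}}$ annihilates the character part and the co-multiplication on $\wstar(G_p)$ corestricts to that of $\wstar(\Oh_p^\times)$ on the complementary summand. Granting this, $\fal(G_p)$ is operator amenable (Ruan, $G_p$ amenable) and has a bounded approximate identity, and $\fsal(\Oh_p^\times)=\fal(\Oh_p^\times)$ is operator amenable since $\Oh_p^\times$ is compact; the standard hereditary result, that an extension of completely contractive Banach algebras with operator amenable kernel and quotient is operator amenable, then gives operator amenability of $\fsal(G_p)$, establishing (i).

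For (ii) I would run the same decomposition at the level of ordinary weak amenability. The quotient $\fal(\Oh_p^\times)\cong\ell^1(\widehat{\Oh_p^\times})$ is the group algebra of a discrete abelian group, hence weakly amenable, while the ideal $\fal(G_p)$ is weakly amenable by the metabelian structure of $G_p$. Because the extension is split by an algebra homomorphism and every algebra in sight is commutative, a bounded derivation of $\fsal(G_p)$ into its dual restricts compatibly to the two pieces, where it must vanish, and the splitting forces the cross terms to vanish as well; this yields weak amenability of $\fsal(G_p)$.
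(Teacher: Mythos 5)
The paper does not prove this theorem; it is quoted from Runde--Spronk \cite{rundes1}, and the only structural information the paper records about it is in Remark~\ref{rem:FSfinidem}(ii), namely the decomposition $\fsal(G_p)=\fal(\Oh_p^\times)\circ q\oplus_{\ell^1}\fal(G_p)$. Your proposal reconstructs exactly that decomposition (via the Mackey/crossed-product analysis of $C^*(G_p)\cong C_0(\what{N})\rtimes K$, with the zero orbit Plancherel-null and the spheres giving the reduced part) and then runs the standard extension arguments, which is essentially the route taken in \cite{rundes1}; so the approach matches and the argument for (i) is sound. One point to repair in (ii): weak amenability of $\fal(G_p)$ does not follow from ``the metabelian structure of $G_p$'' --- the real $ax+b$ group is metabelian yet its Fourier algebra is not weakly amenable --- the correct reason, and the one used in \cite{rundes1}, is the Forrest--Runde criterion \cite{forrestr} applied to the fact that $G_p$ is totally disconnected, so its identity component is (trivially) abelian.
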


Since $G_p$ is non-compact, this removes hope for the maintaining  naive conjectures about
operator amenability, operator weak amenability, or of weak amenability, for the algebra $\fsal(G)$.  
The major result of this note is that ($\dagger$) holds for connected groups.

\subsection{Structure of this paper}
The main aim of the present note
is the shed light on operator amenability of $\fsal(G)$.  
The two main results are Theorems \ref{theo:zeEcfin} and \ref{theo:BGoaGconn}.
The main two sections are essentaiilly development for these theorems.

\subsection{Background and notation}\label{ssec:bandn}
As mentioned before, the {\it Fourier-Stieltjes  algebra} $\fsal(G)$ is defined
by Eymard~\cite{eymard}.  This is the space of matrix coefficients of continuous unitary representations.  
This is the predual  of the universal von Neumann algebra $\wstar(G)$
and, as such, gains a natural operator space structure, as first noted by Blecher~\cite{blecher}.
See, also, the book of Effros and Ruan~\cite{effrosrB}.  Moreover, this is a {\it completely contractive Banach
algebra} in the sense that multiplication is a completely contractive bilinear map, or, equivalently,
the multiplication factors through the operator projective tensor products, $\fsal(G)\what{\otimes}\fsal(G)$.
We shall need few of details of this structure, but will be very reliant on some of its functorial properties, which
we discuss, below.

Following Johnson~\cite{johnson0,johnson}, Ruan~\cite{ruan} defined a completely contractive Banach algebra $\fB$
to be {\it operator amenable} if it operator projective tensor product $\fB\what{\otimes}\fB$ admits a 
{\it completely bounded approximate diagonal}:  a bounded net $(d_\alp)$ which satisfies
\[
(a\otimes 1)d_\alp-d_\alp(1\otimes a)\overset{\alp}{\longrightarrow}0\text{ and }
m(d_\alp)a,am(d_\alp)\overset{\alp}{\longrightarrow}a
\]
for $a$ in $\fB$, where $m$ is the multliplication map, and $a\otimes 1$ and $1\otimes a$ are understood to live in the
unitization of $\fB$ operator tensor itself, if $\fB$ is non-unital.  This gives a certain {\it operator amenability
constant}:
\[
\am_{op}(\fB)=\inf\left\{\sup_\alp\|d_\alp\|_{\fB\what{\otimes}\fB}:
\begin{matrix} (d_\alp)\subset\fB\what{\otimes}\fB
\text{ is a completley } \\ \text{bounded approximate diagonal}\end{matrix}\right\}.
\]
The following property is easy.
\begin{itemize}
\item[(a)] {\it If $\fB$ is operator amenable and $I$ is a closed ideal of $\fB$, then $\fB/I$ is operator amenable,
with $\am_{op}(\fB/I)\leq \am_{op}(\fB)$.}
\end{itemize}
Indeed, it follows from functorial properties of the operator projective tensor product.
The next uses property ideas which go back to Helemski\u{\i}~\cite{helemskii}, and are done more succinctly
by Curtis and Loy~\cite{curtisl}.  See also the text of Runde~\cite{rundeB}.
These have been adapted to the operator space setting by various
authors; see, for example, \cite{forrestw,forrestkls}.
\begin{itemize}
\item[(b)] {\it If $\fB$ is operator amenable and $I$ is a closed ideal of $\fB$ which is spatially completely complemented,
i.e.\ there is a completely bounded projection $P:\fB\to I$, then $I$ admits a bounded approximate identity and
is operator amenable.}
\end{itemize}
Amenability is similar to operator amenability, except that we consider only bounded maps
and the projective tensor product.  As noted above, $\fsal(G)$ is amenable only for compact
virtually abelian groups.

(Operator) amenability implies a certain formally and actually weaker property of {\it (operator) weak 
amenability}.  Furthermore, weak amenability implies operator weak amenability.
For commutative algebras, operator weak amenability enjoys our property (a),
but, at least to the authors' knowledge, it does not pass to complemented ideals.  
It is for this reason we shall have little to say about this property, and thus shall
not go into details of its definition.  We note only that (operator) weak amenability precludes
the existence of point derivations.

Let us make note of two spatially completely complemented ideals of $\fsal(G)$.
The {\it Fourier algebra}, $\fal(G)$, defined in \cite{eymard}, is the space of matrix coefficients
of the left regular representation.  The {\it Rajchman algebra} $\fsal_0(G)$ is the space
of elements of $\fal(G)$ vanishing at infinity.  This has been extensively studied by the first named 
author~\cite{ghandehari}.  Each is a closed, translation invariant subspace, hence admitting a central
projection in $\wstar(G)$ which, by the predual action, 
multiplies $\fsal(G)$ into this algebra, and thus is completely spatially complemented.  
Furthermore, each is an ideal.  We have that $\fal(G)\subseteq\fsal_0(G)$.

In the case that $G$ is abelian, we have that $\fsal(G)=\bm(\what{G})$ and
$\fal(G)=\bl^1(\what{G})$, algebraically isometrically (completley isometrically when
maximal operator space structure is applied to the latter two algebras) by way of the Fourier-Stieltjes
and Fourier transforms.  For pairs of general $\bl^1$-spaces, operator projective
tensor product is the usual projective tensor product.  We can drop the adjectives
`completely' and `operator' from our terminology, in this case.

\section{Topologies and idempotents}

We shall make essential use of
some results from recent work of the second-named author~\cite{spronk}.
Let $G$ be a locally compact group and denote by $\tau_G$ the topology on $G$.
Since we will consider other topologies, we shall think of $\tau_G$ as the ambient topology.

Let $\tau\subseteq\tau_G$ be a (not necessarily Hausdorff) topology for which
$(G,\tau)$ is a topological group.  We let $\pos^\tau$ denote the set of
$\tau$-continuous positive definite functions on $G$, and $\sig(G,\pos^\tau)$
the coarsest topology on $G$ making each member of $\pos^\tau(G)$ continuous.
We then define the {\it unitarizable topologies} on $G$ by
\[
\topu(G)=\{\tau\subseteq\tau_G:(G,\tau)\text{ is a topological group, and }\tau=\sig(G,\pos^\tau)\}.
\]
A variant of the GNS construction (see, for example, \cite{spronk})
shows that for each $\tau$ in $\topu(G)$, there
is a unitary representation $\vpi_\tau:G\to\fU(\fH_\tau)$ for which
induces a homeomorphism from $G/\wbar{\{e\}}^\tau$, with topology induced by $\tau$, onto $\vpi^\tau(G)$.

In particular we let $\vpi:G\to\fU(\fH_G)$ denote the universal representation of $G$, which is injective
and allows that $G$ is homeomorphic to $(\vpi(G),wo)$, where $wo$ denotes the weak operator topology.
This shows that $\tau_G\in\topu(G)$.
We let 
\[
G^\varpi=\wbar{\vpi(G)}^{wo}
\] 
in the closed ball of $\fB(\fH_G)$.  Then $G^\vpi$ is easily
seen to be a compact semi-toplogical semigroup in the weak operator topology.
We call this the {\it Eberlein compactification}.
We note the easy fact, shown, for example, in \cite{spronks}, but also noted elsewhere, 
that $G^\vpi$ is the Gelfand spectrum of the {\it Eberlein algebra}, $\fE(G)=\wbar{\fsal(G)}^{\|\cdot\|_\infty}$.

We consider the central idempotents
\[
\ze(G^\vpi)=\{p\in G^\vpi:p^2=p\text{ and }\vpi(s)p=p\vpi(s)\text{ for all }s\text{ in }G\}.
\]
Since all elements of $G^\pi$ are contractions, these idempotents are, in fact, projections.
The set $\ze(G^\vpi)$ has the usual partial ordering, $p\leq p'$ if and only if $pp'=p$.
The set $\topu(G)$ is partially ordered by $\subseteq$.

We summarize the role of these elements.

\begin{theorem}
{\rm (S. \cite{spronk})}
\begin{itemize}
\item[(i)] There are two partial order preserving maps 
\[
P:\topu(G)\to\ze(G^\vpi)\text{ and }T:\ze(G^\vpi)\to\topu(G) 
\]
which satisfy that $P\circ T=\mathrm{id}_{\ze(G^\vpi)}$
and $\tau\subseteq T\circ P(\tau)$ for each $\tau$ in $\topu(G)$.  Hence
$T\circ P$ is a closure operation on $\topu(G)$ and we let 
\[
\ctopu(G)=T\circ P(\topu(G)).
\]
Furthermore, if $\tau\subsetneq\tau'$ in $\ctopu(G)$, then every $\tau'$-compact set has
empty $\tau$-interior.
\item[(ii)] Using the preadjoint action of $\wstar(G)$ on $\fsal(G)$ we have for
$\tau\in\ctopu(G)$ that
\begin{align*}
&P(\tau)\cdot\fsal(G)=\{u\in\fsal:u\text{ is }\tau\text{-continuous}\}\text{, and} \\
&(I-P(\tau))\cdot\fsal(G)\text{ is an ideal of }\fsal(G).
\end{align*}
Hence letting $\fsal^\tau=P(\tau)\cdot\fsal(G)$ and $\ideal^\tau=(I-P(\tau))\cdot\fsal(G)$
we have a decomposition
\[
\fsal(G)=\fsal^\tau\oplus_{\ell^1}\ideal^\tau
\]
into a closed translation-invariant subalgebra and a closed translation-invariant ideal.
\end{itemize}
\end{theorem}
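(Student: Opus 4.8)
The plan is to exhibit $P$ and $T$ as a Galois-type correspondence between the unitarizable topologies and the central idempotents of $G^\vpi$, reducing everything to the representation theory encoded by $\vpi$. For the map $T$, note that a central idempotent $p\in\ze(G^\vpi)$ is a contractive idempotent in $\fB(\fH_G)$, hence an orthogonal projection commuting with every $\vpi(s)$; thus $p\fH_G$ is $\vpi$-invariant and $s\mapsto\vpi(s)|_{p\fH_G}$ is a unitary representation $\vpi^{(p)}$ of $G$. I would set $T(p)=\sig(G,\pos^{\vpi^{(p)}})$, the topology induced by the positive definite coefficients of $\vpi^{(p)}$; one checks, via the GNS correspondence recalled above, that this topology is unitarizable, so $T(p)\in\topu(G)$. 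For $P$, given $\tau\in\topu(G)$ I would first observe that $\pos^\tau\subseteq\pos(G)$ (as $\tau\subseteq\tau_G$), so the GNS representation $\vpi_\tau$ is a subrepresentation of $\vpi$; moreover $\pos^\tau$ is hereditary in $\pos(G)$ — if $0\le v\le u$ with $u\in\pos^\tau$ then the elementary estimate $v(e)-\operatorname{Re}v(s)\le u(e)-\operatorname{Re}u(s)$ forces $v\in\pos^\tau$ — which is precisely the condition guaranteeing that the projection $e_\tau\in\vpi(G)''=\wstar(G)$ onto $\vpi_\tau$ is central. I would then define $P(\tau)=e_\tau$, the essential point (the main obstacle, below) being that $e_\tau$ in fact lies in $G^\vpi$.

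Granting this, the Galois identities are formal. Monotonicity of $P$ follows because $\tau\subseteq\tau'$ gives $\pos^\tau\subseteq\pos^{\tau'}$, hence $\vpi_\tau\preceq\vpi_{\tau'}$ and $e_\tau\le e_{\tau'}$; monotonicity of $T$ is the dual statement $p\le p'\Rightarrow\pos^{\vpi^{(p)}}\subseteq\pos^{\vpi^{(p')}}$. For $P\circ T=\mathrm{id}$, starting from $p$ the representation $\vpi^{(T(p))}$ is equivalent to $\vpi|_{p\fH_G}$, whose central projection is $p$ itself. For extensivity $\tau\subseteq T\circ P(\tau)$, the coefficients of $\vpi_\tau=\vpi^{(e_\tau)}$ contain $\pos^\tau$, so $\tau=\sig(G,\pos^\tau)\subseteq\sig(G,\pos^{\vpi_\tau})=T(P(\tau))$. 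Idempotency of $T\circ P$ is then immediate from $P\circ T=\mathrm{id}$, so $T\circ P$ is a closure operation and $\ctopu(G)$ is its set of fixed points.

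The main obstacle is to show $e_\tau\in G^\vpi=\wbar{\vpi(G)}^{wo}$. Here I would work inside the compact semi-topological semigroup $G^\vpi$ with the net of $\tau$-neighbourhoods $U$ of $e$. Each $K_U=\wbar{\vpi(U)}^{wo}$ is compact and contains $\vpi(e)=I$, and since $\tau$ is a group topology (so $UU'$ is again a neighbourhood, and for every $V$ there is $U$ with $UU\subseteq V$) the intersection $K_\tau=\bigcap_U K_U$ is a nonempty compact subsemigroup of $G^\vpi$; by Ellis's theorem it carries an idempotent. Any $q\in K_\tau$ is a WOT-limit of some $\vpi(s_i)$ with $s_i\to e$ in $\tau$, so for a cyclic vector $\xi$ of $u\in\pos^\tau$ one has $\langle q\xi,\xi\rangle=\lim_i u(s_i)=u(e)=\norm{\xi}^2$; as $q$ is a contraction this forces $q|_{e_\tau\fH_G}=I$. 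The delicate half is that the relevant idempotent $q$ also annihilates $e_\tau^\perp\fH_G$: for $\eta\in e_\tau^\perp\fH_G$ the coefficient $s\mapsto\langle\vpi(s)\eta,\eta\rangle$ fails to be $\tau$-continuous, and I would invoke the weakly-almost-periodic/mean-ergodic structure of $\fE(G)=\wbar{\fsal(G)}^{\|\cdot\|_\infty}$ on $G^\vpi$ — taking $q$ in the minimal ideal of $K_\tau$ and averaging over the neighbourhood filter — to conclude $q|_{e_\tau^\perp\fH_G}=0$, whence $q=e_\tau\in\ze(G^\vpi)$. This ergodic step is where the real analytic work lies. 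The final clause of (i), that for $\tau\subsetneq\tau'$ in $\ctopu(G)$ each $\tau'$-compact set has empty $\tau$-interior, I would prove by contradiction: a $\tau'$-compact set with nonempty $\tau$-interior would, by a Steinhaus/Baire-category comparison of the two group topologies together with translation, force $\tau=\tau'$.

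For part (ii), fix $\tau\in\ctopu(G)$ and write $p=P(\tau)$. Because $p$ is a central projection in $\wstar(G)$, the von Neumann algebra splits as $p\wstar(G)\oplus(I-p)\wstar(G)$, and dualizing gives the $\ell^1$-decomposition $\fsal(G)=p\cdot\fsal(G)\oplus_{\ell^1}(I-p)\cdot\fsal(G)$ with both summands translation-invariant (as $p$ commutes with every $\vpi(s)$). To identify $\fsal^\tau=p\cdot\fsal(G)$ with the $\tau$-continuous elements I would use the Gelfand picture $\fE(G)=C(G^\vpi)$: for $u\in\fsal(G)$ one has $(p\cdot u)(t)=\what{u}(\vpi(t)p)$, which is $\tau$-continuous since $t\mapsto\vpi(t)p$ is $\tau$-continuous, giving one inclusion; conversely, the fixed-point property $\tau=T\circ P(\tau)$ ensures every $\tau$-continuous element of $\fsal(G)$ is a coefficient of $\vpi_\tau=\vpi^{(p)}$, hence fixed by $p\cdot$, giving the other. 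Finally, since multiplication in $\fsal(G)$ restricts to pointwise multiplication of functions and each $\vpi(t)p$ is a point of the spectrum $G^\vpi$, evaluation there is multiplicative, so $p\cdot(uv)=(p\cdot u)(p\cdot v)$; thus $u\mapsto p\cdot u$ is an idempotent algebra homomorphism, its range $\fsal^\tau$ a subalgebra and its kernel $\ideal^\tau=(I-p)\cdot\fsal(G)$ an ideal, completing the decomposition.
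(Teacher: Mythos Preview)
The paper does \emph{not} prove this theorem. It is stated with attribution ``(S.\ \cite{spronk})'' and cited from the preprint \cite{spronk} (``Topologies, idempotents and ideals''); immediately after the statement the paper moves on to an Example and then to Lemma~\ref{lem:Ldecomp}, with no proof environment in between. So there is no ``paper's own proof'' to compare your proposal against.

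That said, a few comments on your sketch on its own terms. Your definitions of $T$ and $P$ and the verification of the Galois identities are the natural ones and look fine; the hereditary inequality $v(e)-\operatorname{Re}v(s)\le u(e)-\operatorname{Re}u(s)$ for $0\le v\le u$ in $\pos(G)$ is correct (it is just positive-definiteness of $u-v$ applied to a two-point set), and together with the standard estimate $|v(s)-v(t)|^2\le 2v(e)\bigl(v(e)-\operatorname{Re}v(s^{-1}t)\bigr)$ it does give $\tau$-continuity of $v$. The genuine gap you yourself flag is the ``delicate half'' showing that the minimal idempotent $q\in K_\tau$ annihilates $e_\tau^\perp\fH_G$; your appeal to ``the weakly-almost-periodic/mean-ergodic structure of $\fE(G)$'' and ``averaging over the neighbourhood filter'' is not yet an argument, and this is exactly the step that requires the content of \cite{spronk}. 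Likewise, the Steinhaus/Baire-category step for the final clause of (i) is asserted rather than carried out. In part (ii), your use of the Gelfand picture to see that $u\mapsto p\cdot u$ is a homomorphism is clean, but note that you are implicitly using $p\in G^\vpi$ (so that $\vpi(t)p$ is a character of $\fE(G)$), which again rests on the unresolved obstacle above.
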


\begin{example}(i)  We have that $P(\tau_G)=I=\vpi(e)$.  Hence
$\fsal^{\tau_G}=\fsal(G)$ and $\ideal^{\tau_G}=\{0\}$.

(ii) If $\pos_{fin}(G)$ is the closed cone in the continuous positive definite functions generated by 
continuous finite dimensional unitary representations, then 
the {\it alomost periodic} or {\it Bohr} topology $\tau_{ap}=\sig(G,\pos_{fin}(G))$
is the smallest element of $\ctopu(G)$.  In fact, if $\tau$ in $\topu(G)$ is precompact, then
$T\circ P(\tau)=\tau_{ap}$.  We have that $\fsal^{\tau_{ap}}$ is the almost periodic
part of $\fsal(G)$, whereas $\ideal^{\tau_{ap}}$ is the ideal of matrix coefficients
of ``purely infinite" representations, i.e.\ those admitting no finite-dimensional subrepresentations.
The decomposition $\fsal(G)=\fsal^{\tau_{ap}}\oplus_{\ell^1}\ideal^{\tau_{ap}}$ was used critically
in~\cite{rundes,rundes1}.
\end{example}

\begin{remark}\label{rem:rajchmann}
The last comment in (i) of the theorem tells us that if $\tau\in\ctopu(G)\setminus\{\tau_G\}$, then
$\fsal_0(G)\subseteq \ideal^\tau$.  Hence if we define
\[
\ideal(G)=\bigcap_{\tau\in\ctopu(G)\setminus\{ \tau_G\}}\ideal^\tau
\]
then $\fsal_0(G)\subseteq \ideal(G)$.
\end{remark}

We consider a semilattice operation on $\topu(G)$.  Given $\tau,\tau'$ in $\topu(G)$, we see that
$\tau\cap\tau'$ is a group topology (consider inverse images of open sets under the product map
and the fact that inversion is a homeomorphism in each topology).  Furthermore
\[
\tau\cap\tau'=\sig(G,\pos^\tau)\cap\sig(G,\pos^{\tau'})=\sig(G,\pos^\tau\cap\pos^{\tau'})
=\sig(G,\pos^{\tau\cap\tau'}).
\]
Tarski's fixed point theorem (see Gierz et al~\cite[O-2.3]{gierzhklms}) shows that 
$\ctopu(G)$, being the set of fixed points of a monotone closure operator
$T\circ P$, is closed under the binary operation $\cap$.  

Let $L$ be a finite $\cap$-subsemilattice of $\ctopu(G)$.  Given $\tau_0\in L$ let
$L_{\tau_0}=\{\tau\in L:\tau\supseteq\tau_0\}$, which is a $\cap$-subsemilattice of $L$
which is {\it hereditary} in $L$ in the sense that for $\tau$ in $L_{\tau_0}$ and $\tau'$ in $L$
the relation $\tau\subseteq \tau'$ implies that $\tau'\in L_{\tau_0}$.  We notice that
for $\tau_0,\tau_1$ in $L$ then $L_{\tau_0}\cap L_{\tau_1}$,  if it is 
non-empty, admits a minimal element, hence
\[
L_{\tau_0}\cap L_{\tau_1}
=\begin{cases}L_{\tau_0\cap\tau_1} &\text{if one of }\tau_0,\tau_1\text{ is contained in the other} \\
\varnothing &\text{otherwise.}\end{cases}
\]
We then let
\[
\HS(L)=\{L_{\tau_0}:\tau_0\in L\}\cup\{\varnothing\}
\]
which is easily seen to contain all of the hereditary $\cap$-subsemilattices of $L$, and
itself is a $\cap$-semilattice of subsets of $L$.  It is easy to see that this is the dual to $(L,\cap)$ is the sense
that the set of indicator functions on $L$, $\{1_H:H\in\HS(L)\}$, is the set of semicharacters on $(L,\cap)$.

We now show that any finite  $\cap$-subsemilattices of $\ctopu(G)$ gives
a decomposition of $\fsal(G)$ as a direct sum of subalgebras ``graded" over
that subsemilattice's dual semilattice.  Notice that a consequence of (ii), below, is
that each constituent of the direct sum in (i) is a subalgebra.

\begin{lemma}\label{lem:Ldecomp}
Given a finite $\cap$-subsemilattice $L$ of $\ctopu(G)\setminus\{\tau_G\}$, and $H$ in $\HS(L)$, let
\[
P_L(H)=\prod_{\tau'\in H}P(\tau')\prod_{\tau\in L\setminus H}(I-P(\tau))\in\wstar(G)
\]
(where we take an empty product to be $I$) and $\fal_L(H)=P_L(H)\cdot \fsal(G)$. Then we have that
\begin{itemize}
\item[(i)] $\displaystyle \fsal(G)=\ell^1\text{-}\bigoplus_{H\in\HS(L)}\fal_L(H)$;
\item[(ii)] for $H,H'$ in $\HS(L)$, $\fal_L(H)\fal_L(H')\subseteq \fal_L(H\cap H')$; and
\item[(iii)] the map $Q_L:\fsal(G)\to\ell^1(\HS(L))$ (semilattice algebra) given by
\[
Q_L(u)=\sum_{H\in\HS(L)}[P_L(H)\cdot u(e)]\delta_H
\]
is a complete quotient homomorphism.
\end{itemize}
\end{lemma}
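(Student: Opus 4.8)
The plan is to reduce everything to the observation that $P$ restricts to a meet-preserving order isomorphism of $\ctopu(G)$ onto a finite subsemilattice of central projections of $\wstar(G)$, and then to run the standard ``Boolean atom'' argument. The first step I would isolate is meet preservation: for $\tau,\tau'\in\ctopu(G)$ one has $P(\tau\cap\tau')=P(\tau)P(\tau')$. The inequality $P(\tau\cap\tau')\leq P(\tau)P(\tau')$ is immediate, since $\tau\cap\tau'$ is coarser than each of $\tau,\tau'$, so every $(\tau\cap\tau')$-continuous coefficient is $\tau$- and $\tau'$-continuous; thus $\fsal^{\tau\cap\tau'}\subseteq\fsal^\tau\cap\fsal^{\tau'}=P(\tau)P(\tau')\cdot\fsal(G)$, which for central projections acting on the predual forces $P(\tau\cap\tau')\leq P(\tau)P(\tau')$. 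For the reverse inequality I would use that $G^\vpi$ is a semigroup, so $P(\tau)P(\tau')\in\ze(G^\vpi)$, together with the facts that $T$ is order preserving and $P\circ T=\mathrm{id}$: from $P(\tau)P(\tau')\leq P(\tau),P(\tau')$ and $T(P(\tau))=\tau$ we get $T(P(\tau)P(\tau'))\subseteq\tau\cap\tau'$, and applying the order-preserving $P$ gives $P(\tau)P(\tau')=P\circ T(P(\tau)P(\tau'))\leq P(\tau\cap\tau')$.

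For part (i), set $E_S=\prod_{\tau\in S}P(\tau)\prod_{\tau\in L\setminus S}(I-P(\tau))$ for $S\subseteq L$, so that $P_L(H)=E_H$, the $E_S$ are pairwise orthogonal, and $\sum_{S\subseteq L}E_S=I$ (they are the atoms of the finite Boolean algebra generated by the commuting central projections $P(\tau)$). I would then check that $E_S=0$ unless $S\in\HS(L)$: if $S$ is not hereditary there are $\tau\subseteq\tau'$ with $\tau\in S$, $\tau'\notin S$, so $E_S$ carries the factor $P(\tau)(I-P(\tau'))=0$; if $S$ is hereditary but not $\cap$-closed there are $\tau_1,\tau_2\in S$ with $\tau_1\cap\tau_2\in L\setminus S$, and meet preservation supplies the factor $P(\tau_1)P(\tau_2)(I-P(\tau_1\cap\tau_2))=P(\tau_1\cap\tau_2)(I-P(\tau_1\cap\tau_2))=0$. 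Hence $\sum_{H\in\HS(L)}P_L(H)=I$, the $P_L(H)$ are orthogonal central projections, and the $\ell^\infty$-decomposition $\wstar(G)=\ell^\infty\text{-}\bigoplus_H P_L(H)\wstar(G)$ dualizes to the asserted completely isometric $\ell^1$-decomposition of the predual.

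For part (ii), the key point is that since $\fsal^\tau$ is a subalgebra complemented by the ideal $\ideal^\tau$, the projection $u\mapsto P(\tau)\cdot u$ onto $\fsal^\tau$ is multiplicative, $P(\tau)\cdot(uv)=(P(\tau)\cdot u)(P(\tau)\cdot v)$ (expand $u,v$ along the decomposition and absorb the three cross terms into $\ideal^\tau$). Recording that $u\in\fal_L(H)$ iff $P(\tau)\cdot u=u$ for $\tau\in H$ and $P(\tau)\cdot u=0$ for $\tau\in L\setminus H$, one computes for $u\in\fal_L(H)$, $v\in\fal_L(H')$ and $\tau\in L$ that $P(\tau)\cdot(uv)=(P(\tau)\cdot u)(P(\tau)\cdot v)$ equals $uv$ when $\tau\in H\cap H'$ and vanishes when $\tau\in L\setminus(H\cap H')$, which says exactly that $uv\in\fal_L(H\cap H')$. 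Part (iii) then follows formally: writing $\hat u(H)=(P_L(H)\cdot u)(e)=\langle u,P_L(H)\rangle$, parts (i) and (ii) give $P_L(K)\cdot(uv)=\sum_{H\cap H'=K}(P_L(H)\cdot u)(P_L(H')\cdot v)$, and evaluating at $e$ (where multiplication is pointwise) yields $\widehat{uv}(K)=\sum_{H\cap H'=K}\hat u(H)\hat v(H')$, the $K$-coefficient of $Q_L(u)Q_L(v)$, so $Q_L$ is a homomorphism. For the complete quotient claim I would identify the adjoint $Q_L^*\colon\ell^\infty(\HS(L))\to\wstar(G)$ as $f\mapsto\sum_H f(H)P_L(H)$, a unital normal $*$-homomorphism (the $P_L(H)$ being orthogonal central projections summing to $I$) which is injective precisely because each $P_L(H)\neq0$; an injective $*$-homomorphism of $C^*$-algebras is completely isometric, so $Q_L^*$ is a complete isometry and $Q_L$ a complete quotient map.

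The genuinely non-formal point, and the one I expect to be the main obstacle, is the nonvanishing $P_L(H)\neq0$ for every $H\in\HS(L)$, on which both surjectivity of $Q_L$ and the isometry in (iii) rest. Writing $H=L_{\tau_0}$, the minimum $\tau_0$ absorbs the finer topologies so $\prod_{\tau\in L_{\tau_0}}P(\tau)=P(\tau_0)$, and meet preservation then gives $P_L(H)=P(\tau_0)-\bigvee_{\tau\in L,\ \tau\not\supseteq\tau_0}P(\tau_0\cap\tau)$ with each $\tau_0\cap\tau\subsetneq\tau_0$. Thus nonvanishing amounts to showing that the $\tau_0$-continuous coefficients are not exhausted by the closed span of the strictly coarser pieces $\fsal^{\tau_0\cap\tau}$. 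I expect this to be exactly where the final clause of part (i) of the cited theorem must enter, namely that for $\tau\subsetneq\tau'$ in $\ctopu(G)$ every $\tau'$-compact set has empty $\tau$-interior, ruling out that a member of $\ctopu(G)$ is generated at the level of coefficient functions by finitely many strictly coarser topologies.
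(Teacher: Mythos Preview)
Your argument follows the same line as the paper's: expand $I=\prod_{\tau\in L}[P(\tau)+(I-P(\tau))]$ and identify the surviving summands with $\HS(L)$; characterise $\fal_L(H)$ as $\bigcap_{\tau'\in H}\fsal^{\tau'}\cap\bigcap_{\tau\in L\setminus H}\ideal^\tau$ to deduce (ii); and read off (iii) by checking that $Q_L^*$ is a $*$-monomorphism into $\wstar(G)$.

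You are in fact more careful than the paper in two places. First, you make the meet preservation $P(\tau\cap\tau')=P(\tau)P(\tau')$ explicit and use it to kill the terms $E_S$ with $S$ hereditary but not $\cap$-closed; the paper's proof records only the vanishing for non-hereditary $S$ (via $P(\tau')(I-P(\tau))=0$ when $\tau'\subseteq\tau$) and passes silently over the other case, which genuinely requires the meet identity. Second, you correctly isolate the non-vanishing $P_L(H)\neq 0$ as the content behind injectivity of $Q_L^*$. The paper simply asserts that $Q_L^*$ ``is an isomorphism of (commutative) C*-algebras'' without a separate argument for injectivity, so on this point you have not missed anything the paper provides --- you have gone further in articulating what remains to be checked. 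Your reduction of the case $H=L_{\tau_0}$ to showing that $\fsal^{\tau_0}$ is not covered by finitely many strictly coarser $\fsal^{\tau_0\cap\tau}$, and your instinct that the empty-interior clause is the relevant input, are both on target.
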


\begin{proof}
 It is evident that $P_L(\varnothing)=0$ if
$\tau_G\in L$, which is why we exclude that case.
Notice, too, that $P_L(L)=P(\bigcap L)$, where $\bigcap L$ is the smallest element of $L$,
since $P:\ctopu(G)\to\ze(G^\vpi)$ is increasing.  Furthermore, we see that $P(\tau')(I-P(\tau))=0$,
if $\tau'\subseteq\tau$ in $L$.  Hence we have that
\begin{align*}
I&=\prod_{\tau\in L}[P(\tau)+(I-P(\tau))] \\
&=\sum_{H\in \HS(L)}\prod_{\tau'\in H}P(\tau')\prod_{\tau\in L\setminus H}(I-P(\tau)) 
=\sum_{H\in \HS(L)}P_L(H)
\end{align*}
which gives (i)

We now note for $H$ in $\HS(L)$ that
\[
\fal_H(L)=\bigcap_{\tau'\in H}\fsal^{\tau'}\cap\bigcap_{\tau\in L\setminus H}\ideal^\tau
\]
where we treat the intersection over an empty index set as all of $\fsal(G)$.
Indeed, each space in the equation above is exactly
\[
\{u\in\fsal(G):P(\tau')\cdot u=u\text{ for }\tau'\text{ in }H\text{ and }
P(\tau)\cdot u=0\text{ for }\tau\text{ in }L\setminus H\}.
\]
Hence if $u\in \fal_L(H)$ and $u'\in\fal_L(H')$, then $uu'$ is $\tau'$-continuous for each
$\tau'$ in $H\cap H'$, and in the ideal $\ideal^\tau$ for each $\tau\in (L\setminus H)\cup
(L\setminus H')=L\setminus(H\cap H')$; i.e.\ $uu'\in \fal_L(H\cap H')$, which is (ii).

The multiplication relation (ii), tells us that $(P_L(H)\cdot u)(P_L(H')\cdot u')\in P_L(H\cap H')\cdot \fsal(G)$,
which implies that $Q_L$ is a homomorphism.  The adjoint map $Q_L^*:\ell^\infty(\HS(L))\to\wstar(G)$
is given by $Q_L^*(a_H)_{H\in\HS(L)}=\sum_{H\in \HS(L)}a_HP_L(H)$ is an isomorphism
of (commutative) C*-algebras, hence a complete isometry, which shows that $Q_L$ is a complete
quotient map.  \end{proof}

Now for our first main result.

\begin{theorem}\label{theo:zeEcfin}
If $\fsal(G)$ is operator amenable, then $\ze(G^\varpi)$ must be finite.
\end{theorem}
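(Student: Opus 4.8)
The plan is to argue the contrapositive: assuming $\ze(G^\vpi)$ is infinite, I would produce a contradiction with operator amenability. First I record that the maps $P,T$ of the cited theorem restrict to a bijection between $\ze(G^\vpi)$ and $\ctopu(G)$: the relation $P\circ T=\mathrm{id}_{\ze(G^\vpi)}$ forces $T$ to be injective with $T(\ze(G^\vpi))=\ctopu(G)$, so $\ze(G^\vpi)$ is infinite precisely when $\ctopu(G)$ is. Since $\tau_G$ is the largest element of $\ctopu(G)$ with respect to $\subseteq$ and every other element lies strictly below it, an infinite $\ctopu(G)$ contains arbitrarily large finite subsets of $\ctopu(G)\setminus\{\tau_G\}$; closing such a subset under $\cap$ keeps it inside $\ctopu(G)\setminus\{\tau_G\}$, as meets of elements $\neq\tau_G$ remain $\neq\tau_G$. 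This yields, for every $n$, a finite $\cap$-subsemilattice $L\subseteq\ctopu(G)\setminus\{\tau_G\}$ with $|L|\geq n$.

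Now Lemma~\ref{lem:Ldecomp}(iii) gives a complete quotient homomorphism $Q_L:\fsal(G)\to\ell^1(\HS(L))$, so $\ell^1(\HS(L))$ is completely isometrically isomorphic to a quotient algebra of $\fsal(G)$. If $\fsal(G)$ is operator amenable, property (a) then bounds $\am_{op}(\ell^1(\HS(L)))\leq\am_{op}(\fsal(G))$ uniformly in $L$. The whole argument thus reduces to the following lower bound, which I would isolate as a lemma: \emph{for any finite $\cap$-semilattice $S$ with largest element, $\am_{op}(\ell^1(S))\geq |S|$.} Granting this, and noting that $\tau_0\mapsto L_{\tau_0}$ embeds $L$ into $\HS(L)$ (each $\tau_0$ is recovered as the minimum of $L_{\tau_0}$), so that $|\HS(L)|\geq|L|\geq n$, I would obtain $n\leq\am_{op}(\ell^1(\HS(L)))\leq\am_{op}(\fsal(G))$ for every $n$, which is absurd. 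Hence $\ze(G^\vpi)$ is finite.

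To prove the lemma I would use that $\ell^1(S)$, being an $\ell^1$-space with its maximal operator space structure, satisfies $\ell^1(S)\what{\otimes}\ell^1(S)=\ell^1(S\times S)$ with projective tensor norm equal to the $\ell^1$-norm, and that operator amenability coincides with amenability here, as recorded in the background section. The finite semilattice algebra is commutative and semisimple: Möbius inversion over $(S,\leq)$ produces a complete family of orthogonal minimal idempotents $e_s=\sum_{t\leq s}\mu(t,s)\delta_t$, identifying $\ell^1(S)$ with $\Cee^{|S|}$ as an algebra. A unital finite-dimensional commutative semisimple algebra admits a unique diagonal, namely $d=\sum_{s\in S}e_s\otimes e_s$, and any completely bounded approximate diagonal must converge to it, so $\am_{op}(\ell^1(S))\geq\|d\|_{\ell^1(S\times S)}$. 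The crux is then a coefficient count: the coefficient of $\delta_s\otimes\delta_s$ in $d$ equals $\sum_{t\geq s}\mu(s,t)^2\geq\mu(s,s)^2=1$, a sum of squares that cannot be cancelled by any other contribution, so summing these $|S|$ strictly positive diagonal coefficients already gives $\|d\|_{\ell^1(S\times S)}\geq|S|$.

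The main obstacle is precisely this lemma, and within it the passage from ``unique genuine diagonal'' to a lower bound on the operator amenability constant. I would justify it by a finite-dimensional compactness argument: if $(d_\alp)$ is a completely bounded approximate diagonal with $\sup_\alp\|d_\alp\|=M$, then a subnet converges to a genuine diagonal of norm $\leq M$, which by uniqueness is $d$, whence $M\geq\|d\|$. The only other point demanding care is that the relevant norm is genuinely the $\ell^1(S\times S)$-norm, so that the $|S|$ positive diagonal coefficients actually add rather than average; this is where the maximal operator space structure of $\ell^1(S)$ is used decisively, since in a norm making the minimal idempotents unit vectors the estimate would collapse.
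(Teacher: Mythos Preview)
Your argument is correct and follows essentially the same route as the paper: both pass to a large finite $\cap$-subsemilattice $L\subseteq\ctopu(G)\setminus\{\tau_G\}$, invoke Lemma~\ref{lem:Ldecomp}(iii) to exhibit $\ell^1(\HS(L))$ as a complete quotient of $\fsal(G)$, and then bound $\am_{op}(\fsal(G))$ below by the amenability constant of this semilattice algebra, which grows without bound in $|L|$. The only substantive difference is in that last step: the paper simply cites \cite[Cor.~1.8]{ghandeharihs} (and implicitly \cite{duncann}) for the sharp estimate $\am(\ell^1(\HS(L)))\geq 2|\HS(L)|-1$, whereas you supply a self-contained proof of the weaker bound $\am(\ell^1(\HS(L)))\geq|\HS(L)|$ via the explicit diagonal $d=\sum_s e_s\otimes e_s$ and the M\"obius-function computation of its $\ell^1$-norm. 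Either bound suffices here, so your version trades sharpness for independence from the cited literature.
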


\begin{proof} 
If $\ze(G^\varpi)$ is infinite, then it admits arbitrarily large finite $\cap$-subsemilattices.
Given such a subsemilattice $L\subseteq\ctopu(G)\setminus\{\tau_G\}$, the previous lemma provides
the following estimates of amenability constants:
\begin{align*}
\mathrm{AM}_{op}(\fsal(G))&\geq \mathrm{AM}_{op}(\ell^1(\HS(L)))=\mathrm{AM}(\ell^1(\HS(L))) \\
&\geq 2|\HS(L)|-1\geq 2|L|+1.
\end{align*}
Indeed, the equality holds as $\ell^1$ is a maximal operator space, and the
middle inequality is shown in \cite[Cor.\ 1.8]{ghandeharihs} (and implicitly in \cite{duncann}).
\end{proof}

\begin{remark}\label{rem:FSfinidem}
Let us briefly examine the nature of $\fsal(G)$ when $\ctopu(G)$ is finite.
\begin{itemize}
\item[(i)]  It follows from Lemma \ref{lem:Ldecomp} and its proof 
(using the subsemilattice $L=\ctopu(G)\setminus\{\tau_G\}$), that $\fsal(G)$ admits decomposition into
translation-invariant subalgebras:
\[
\fsal^{\tau_{ap}}\oplus_{\ell^1}\left[
\ell^1\text{-}\bigoplus_{\substack{\tau_0\in\ctopu(G) \\ \tau_{ap}\subsetneq\tau_0\subsetneq\tau_G}}\left(
\fsal^{\tau_0}\cap\bigcap_{\substack{\tau\in\ctopu(G)\\ \tau_{ap}\subseteq \tau\subsetneq \tau_0}}
\ideal^\tau\right)\right]\oplus_{\ell^1}\ideal(G)
\]
where $\ideal(G)$ is defined in Remark \ref{rem:rajchmann}. 

\item[(ii)]  In the example of Theorem \ref{theo:rundes} we have that 
\[
\fsal(G_p)=\fsal^{\tau_{ap}}\oplus_{\ell^1}\ideal^{\tau_{ap}}=
\fal(\Oh_p^\times)\circ q\oplus_{\ell^1}\fal(G_p)
\]
where $q:G_p\to \Oh_p^\times$ is the quotient map, and, in fact, the map into the almost periodic compactification.
Here $G_p^\vpi=G_p\sqcup \Oh_p^\times$ is a Clifford semigroup with $2$ idempotents.  As shown in \cite{rundes1}, $\mathrm{AM}_{op}(\fsal(G_p))=5$.

\item[(ii)]  We have that $G$ is compact if and only if $\ctopu(G)=\{\tau_G\}$,
in particular exactly when $\tau_G=\tau_{ap}$.  As shown in \cite{rundes},
this is exactly the case when $\mathrm{AM}_{op}(\fsal(G))<5$.
\end{itemize}
\end{remark}

\section{Connected groups}

As we saw in the last section, we are interested in locally compact groups $G$ for which
$\ze(G^\vpi)$ is finite.  If $G$ is connected, it is shown by Ruppert~\cite[5.2]{ruppertB}
that idempotents in $G^\vpi$ are automatically central.  Hence for a connected group
we shall consider the set $\ee(G^\vpi)$ of all idempotents in $G^\vpi$.

We call $G$ {\it totally minimal} if any quotient by a closed normal subgroup $G/S$, admits
no Hausdorff group topology which is strictly coarser that the quotient topology.
In particular, for a totally minimal group locally compact group $G$ we have that
\[
\topu(G)=\{\tau_{G:S}:S\text{ is a closed normal subgroup of }G\}
\]
where each $\tau_{G:S}=\{q_S^{-1}(U):U\in\tau_{G/S}\}$.  Here, and hereafter, we shall
always let $q_S:G\to G/S$ denote the quotient map.  If we consider for closed normal subgroup
a topological {\it commensurability} relation: $S\sim S'$ if and only if $S/(S\cap S')$ and $S'/(S\cap S')$
are compact, then we may consider certain minimal representative of each commensurability class
\[
\underline{S}=\bigcap\{S':S'\text{ is a closed normal subgroup of }G\text{ with }S'\sim S\}
\]
As shown in \cite[\S 4.3]{spronk}, $S/\underline{S}$ is compact.
Letting $\fN(G)$ denote the collection of these distinct minimal representatives we see that
\[
\ctopu(G)=\{\tau_{G:S}:S\in\fN(G)\}.
\]

\begin{theorem}\label{theo:mayer} {\rm (Mayer \cite{mayer1,mayer2})}
Let $G$ be a connected locally compact group.  Consider the condition
\begin{itemize}
\item[(i)] $\ee(G^\vpi)$ is finite.
\end{itemize}
This condition implies the following two conditions, which are equivalent to one another.
\begin{itemize}
\item[(ii)] $G$ is totally minimal; and

\item[(iii)] there is a compact subgroup $M$ for which
\[
G/M=N\rtimes R
\]
where $N$ is a connected nilpotent Lie group and $R$ is a connecte linear reductive group, which acts
on $N$ without non-trivial fixed points.
\end{itemize}
\end{theorem}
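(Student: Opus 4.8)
The plan is to prove the structural equivalence (ii)$\Leftrightarrow$(iii) on its own, and then obtain (i)$\Rightarrow$(ii) by contraposition, after which (iii) is free. Throughout I would reduce to the Lie case at the very start: since $G$ is connected and locally compact, the Gleason--Yamabe solution of Hilbert's fifth problem supplies a compact normal subgroup $K$ with $G/K$ a connected Lie group. A compact subgroup is invisible to both sides of the statement --- it cannot be shrunk by a strictly coarser group topology, so total minimality is unaffected, and it may be absorbed into the subgroup $M$ of (iii). Hence I may assume $G$ is a connected Lie group, and I would carry out the rest of the argument there.

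For (iii)$\Rightarrow$(ii) I would invoke the minimality theory of semidirect products. Writing $G/M=N\rtimes R$ with $N$ connected nilpotent and $R$ reductive acting on $N$ without nontrivial fixed points, the fixed-point-free action is exactly the rigidity required: it prevents any Hausdorff quotient from admitting a nontrivial continuous homomorphism onto a vector group, and combined with the Prodanov--Stoyanov theorem (every minimal abelian group is precompact, so a non-precompact abelian quotient would defeat minimality) it forces each Hausdorff quotient to be minimal. Here one reads ``reductive'' as carrying no noncompact central torus, i.e.\ $\mathrm{Hom}_{\mathrm{cts}}(R,\Ree)=0$; this is what keeps $R$ itself from contributing an $\Ree$-quotient. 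The compact $M$ is harmless. This direction is essentially an application of known minimality criteria.

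For (ii)$\Rightarrow$(iii), the structural heart, I would begin from the Levi decomposition $G=\mathrm{Rad}(G)\cdot L$ with $L$ semisimple and $\mathrm{Rad}(G)$ the solvable radical, and argue that total minimality forces two things. First, every abelian Hausdorff quotient must be compact: otherwise a surjection onto $\Ree^{k}$ with $k\geq 1$ would be non-precompact, hence non-minimal by Prodanov--Stoyanov, contradicting (ii); this pins down the nilradical $N$ as a cocompact nilpotent part with no residual vector directions, and forces $R$ (the product of $L$ with the relevant compact torus) to be reductive without a noncompact central torus. Second, $R$ must act on $N$ without nontrivial fixed points, since a fixed subgroup would descend to a central vector quotient and again produce an $\Ree$-direction breaking minimality. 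Assembling these yields the asserted form $G/M=N\rtimes R$. \emph{This is the step I expect to be the main obstacle:} extracting the precise $N\rtimes R$ form with a fixed-point-free reductive action from the abstract hypothesis of total minimality, and in particular ruling out \emph{every} hidden vector direction in the solvable radical, is where the real work lies.

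Finally, for (i)$\Rightarrow$(ii) I would argue the contrapositive. If $G$ is not totally minimal then, by the analysis above, (iii) fails, and in each way it can fail --- a non-compact abelian quotient, a noncompact central torus in the reductive part, or a nontrivial $R$-fixed subgroup of $N$ --- one obtains a continuous surjection $G\twoheadrightarrow\Ree$. Pulling back unitarizable topologies along this quotient gives an order-preserving injection of $\topu(\Ree)$ into $\topu(G)$, and composing with the closure operator $T\circ P$ embeds the idempotent data of $\Ree^\vpi$ into that of $G^\vpi$ via the correspondence $P\colon\ctopu(G)\to\ze(G^\vpi)$. Since $\ze(\Ree^\vpi)$ is infinite --- the spectrum of the measure algebra $\bm(\Ree)=\fsal(\Ree)$ carries infinitely many central idempotents, reflecting the abundance of coarser unitarizable topologies on $\Ree$ --- this yields infinitely many elements of $\ee(G^\vpi)$, contradicting (i). Two points in this last step demand care and I would treat them explicitly: verifying that the closure operation on the $G$-side does not collapse distinct coarse topologies pulled back from $\Ree$, and establishing the infinitude of $\ze(\Ree^\vpi)$ itself.
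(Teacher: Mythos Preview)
Your route to (i)$\Rightarrow$(ii) has a genuine gap. You argue that if $G$ fails to be totally minimal then one of your listed obstructions to (iii) occurs, and in each case $G$ surjects continuously onto $\Ree$. But the list is incomplete: (iii) also demands that $R$ be \emph{linear}, and this can fail without any abelian quotient appearing. The universal cover $\wtil{G}$ of $\mathrm{SL}_2(\Ree)$ is a connected Lie group with centre isomorphic to $\Zee$, hence non-compact, so by Mayer's criterion it is not totally minimal; yet $\wtil{G}$ is perfect, so $\mathrm{Hom}_{\mathrm{cts}}(\wtil{G},\Ree)=0$ and your mechanism produces nothing. Thus the contrapositive collapses precisely on the non-linear semisimple case. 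A secondary issue: you identify $\Ree^\vpi$ with the spectrum of $\bm(\Ree)=\fsal(\Ree)$, but $G^\vpi$ is the spectrum of the \emph{uniform closure} $\fE(G)=\wbar{\fsal(G)}^{\|\cdot\|_\infty}$, a much smaller algebra; idempotents in the Gelfand spectrum of $\bm(\Ree)$ do not automatically survive in $\Ree^\vpi$, so the infinitude you want is not the obvious measure-algebra fact.

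The paper sidesteps both problems by not routing (i)$\Rightarrow$(ii) through (iii) at all. It invokes Mayer's characterisation directly: a connected $G$ is totally minimal if and only if $Z(G/S)$ is compact for every closed normal $S$. If some $Z(G/S)$ is non-compact, passing to a Lie quotient one finds a \emph{discrete central} copy of $\Zee$ inside $G/S$; Cowling--Rodway gives $\fsal(G/S)|_\Zee=\fsal(\Zee)$, hence $\fE(G/S)|_\Zee=\fE(\Zee)$, and the key external input is the theorem of Bouziad--Lema\'{n}czyk--Mentzen that $\ee(\Zee^\vpi)$ is infinite. These idempotents then lift to $(G/S)^\vpi$ and to $G^\vpi$ via the functorial quotient map of Eberlein compactifications. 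Working with a central \emph{subgroup} $\Zee$ rather than a \emph{quotient} $\Ree$ is exactly what handles examples like $\wtil{G}$ above, and it also avoids any appeal to the $\topu$/$\ze$ correspondence or worries about the closure operator collapsing pulled-back topologies. Finally, the paper does not reprove (ii)$\Leftrightarrow$(iii); it simply cites Mayer, so your effort on that equivalence, while not wrong in spirit, is orthogonal to what is actually needed here.
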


\begin{proof}  The equivalence of (ii) and (iii) is \cite[Theo.\ 2.4]{mayer1}.

The proof that (i) implies (ii) is essentially \cite[Rem.\ 18]{mayer2}.  However,
that result is given in the context of the weakly almost periodic compactification,
so we shall check that it works in the context of the Eberlein compactification, $G^\vpi$, too.

Let given any closed normal subgroup $S$, the fact that
$\fsal(G:S)=\fsal(G/S)\circ q_S$ is closed subalgebra of $\fsal(G)$ (\cite{eymard, arsac}) shows that
the uniform closures $\fE(G:S)=\fE(G/S)\circ q_S$ and
$\fE(G)$ enjoys the same containment relation.  Hence restricting characters from $\fE(G)$
to $\fE(G:S)$ gives a homomorphism $\vpi_G(x)\mapsto \vpi_{G/S}(xS)$
which extends to a semigroup quotient map from $G^\vpi$ to $(G/S)^\vpi$.

If the centre $Z(G/S)$ is not compact, then for some Lie quotient $L=(G/S)/M$
by a compact subgroup $Z(L)$ is not compact.  
But then $Z(L)$ contains a discrete copy of integers, $\Zee$, hence so to must
$Z(G/S)$.  Thanks to Cowling and Rodway~\cite{cowlingr}, $\fsal(G/S)|_{\Zee}=\fsal(\Zee)$,
so $\fE(G/S)|_{\Zee}=\fE(\Zee)$.  But $\ee(\Zee)$ is infinite, thanks to Bouziad et al~\cite{bouziadlm}
(see, also, \cite{elgun}).  Hence, since any closed subsemigroup of a semitopological
semigroup must contain an idempotent, we see that $\ee((G/S)^\vpi)$, and hence $\ee(G^\vpi)$,
are each infinite.   

Hence each $Z(G/S)$ is compact, and (ii) follows form \cite[Prop.\ 2.4]{mayer1}.
\end{proof}

\begin{remark}\label{rem:FSconnmin}
It is amusing to see what the structure of $\fsal(G)$ is for a connected totally
minimal group $G$.  The decomposition below is merely a reformulation of
one of Mayer~\cite[Theo.\ 15]{mayer2}.  However, in learning about it, we shall develop
some tools for later use.

Given $G$, $M$, $N$ and $R$ as in the theorem above, let $\wbar{G}=G/M$.
We first consider the family of minimal non-commensurable normal subgroups 
of the Lie quotient, $\fN(\wbar{G})$.

We first consider subgroups of $N$.  Let
\[
\fN_R(N)=\{S\subseteq N:S\text{ is a connected normal }R\text{-invariant subgroup}\}.
\]
The see that such elements are non-commensurable, we appeal to the structure theory
of nilpotent groups (e.g.\ \cite[11.2.2]{hilgertnB}) to see that an compact subgroup $T$
of $N$ must be central.  Suppose $T$ is the maximal compact group, hence characteristic
and thus $R$-invariant.
But connected $R$ acts on $T$, hence on its discrete dual
without non-trvial fixed points, so $T$ is trivial.  Thus we conclude that
the exponential mapping from the Lie algebra, $\exp:\ffn\to N$ is a homeomorphism.
Hence if $S\supsetneq S'$ in $\fN_R(N)$, then $S/S'$ is be homeomorphic
to the quotient Lie algebra $\ffs/\ffs'$.

We may decompose $R=(H_1\times \dots\times H_n\times K)/C$ where each $H_k$ is
a non-compact simple linear Lie group, $K$ is a compact linear group, and $C$ is a finite
central subgroup of the product.  We have that $\fN(R)$ consists of groups
$H_F=\prod_{k\in F}H_k/C_F$ where $F$ is a (possibly empty) subset of $\{1,\dots,n\}$
and $C_F=C\cap\prod_{k\in F}H_k$.  We have that
\[
\fN(\wbar{G})=\bigl\{S\rtimes H_F:S\in\fN_R(N)\text{ and }
F\subseteq\{1,\dots,n\}\bigr\}.
\]

Just as in \cite[Lem.\ 4.4]{mayer1} we conclude that that $G^\vpi\cong
\bigsqcup_{S\in\fN(\wbar{G})}G/S$, and that
\begin{equation}\label{eq:FSminconn}
\fsal(\wbar{G})=\ell^1\text{-}\bigoplus_{S\in\fN(\wbar{G})}\fsal_0(\wbar{G}/S)\circ q_S.
\end{equation}
Comparing with Remark \ref{rem:FSfinidem}, and inductively using Remark \ref{rem:rajchmann}, we see that each
\[
\fsal^{\tau_{G:S_0}}\cap\bigcap_{\substack{S\in\fN(\wbar{G})\\ S_0\subsetneq S\subseteq N\rtimes H_{\{1,\dots,n\}}
}}\ideal^{\tau_{G:S}}
=\fsal_0(\wbar{G}/S_0)\circ q_{S_0}.
\]
Notice that $\tau_{G:N\rtimes H_{\{1,\dots,n\}}}=\tau_{ap}$.  In the case that $N$ is trivial, this follows form
the description of $\fsal(\wbar{G})$ for semisimple $\wbar{G}$ with finite centre, due to Cowling~\cite{cowling}.

Let us consider the decomposition of $\fsal(G)$.  For each $S$ in $\fN(\wbar{G})$ let
\begin{equation}\label{eq:tS}
\wtil{S}=\bigcap\left\{S'\subseteq G:\begin{matrix} S'\text{ is a closed normal sub-} \\
\text{group with }q_M(S')=S\end{matrix}\right\}.
\end{equation}

For $S$ in $\fN_R(N)$, let us see that $\wtil{S}$ is nilpotent.  We let
$J=r(q_M^{-1}(S)_e)$, which is the pro-solvable radical of the connected component
of $q_M^{-1}(S)$.  [In \cite[Prop.\ 2.4]{mayer1} and comments afterwards, $\wbar{G}$ is arranged by first gaining a Lie
quotient $G/L$ by a small normal subgroup, and then applying the adjoint map, which has kernel $Z(G/L)$.
We cannot ensure that this kernel (centre) is is connected, so we cannot ensure that $M$ is connected.]
By \cite[10.25]{hofmannmB} the radical is characteristic, and $q_M(J)$ is dense in $S$.  Since $q_M$ has 
compact kernel, we see that $q_M(J)=S$.  Let $S=S_1\supsetneq S_2\supsetneq\dots \supsetneq S_c\supsetneq\{e\}$
be the descending central series of $S$.  Then since $J/(J\cap M)= S$, we have that $J_{c+1}\subseteq M$,
and hence is a solvable, connected compact group, thus abelian.  Moreover, connected $J$
acts by conjugation on abelian $J_{c+1}$ trivially [just like the proof for $T\subseteq N$, above], so $J_{c+1}$ is central.
Thus $J$ is nilpotent, so $\wtil{S}\subseteq J$ is nilpotent.  

For each $F\subseteq\{1,\dots,n\}$, $q_M^{-1}(H_F)$, being a compact extension of a semi-simple group 
will contain a semi-simple group
$\wtil{H}_F=\prod_{k\in F}H_k/D_F$, where $D_F$ is a subgroup of $C_F$, above.  Moreover
$q_M(\wtil{H}_F)=H_F$.  

We obtain that
\[
\fN(G)=\{\wtil{S}\rtimes \wtil{H}_F\text{ normal in }G:S\in\fN_R(N),F\subseteq\{1,\dots,n\}\}
\]
since each $\fsal_0(G/(\wtil{S}\rtimes \wtil{H}_F))$, averages over $M$ to give
$\fsal(G/(S\rtimes H_F))$.
We may decompose $\fsal(G)$ as in (\ref{eq:FSminconn}), accordingly.
\end{remark}

We now turn to the main result of this section. 

\begin{theorem}\label{theo:BGoaGconn}
Suppose that $G$ is connected. Then $\fsal(G)$ is operator amenable if and only if $G$ is compact.
\end{theorem}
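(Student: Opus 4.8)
The plan is to treat the two implications separately, with essentially all of the work in the forward direction. For the converse, suppose $G$ is compact. Then by the Peter--Weyl theorem every continuous unitary representation is a sum of finite-dimensional subrepresentations of the regular representation, so $\fsal(G)=\fal(G)$ completely isometrically; as $G$ is compact it is amenable, and Theorem~\ref{theo:johruan}(iii) gives that $\fsal(G)=\fal(G)$ is operator amenable.

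For the forward direction, assume $\fsal(G)$ is operator amenable and $G$ is connected. First I would apply Theorem~\ref{theo:zeEcfin} to see that $\ze(G^\vpi)$ is finite; since $G$ is connected, Ruppert's theorem \cite[5.2]{ruppertB} makes every idempotent central, so $\ee(G^\vpi)=\ze(G^\vpi)$ is finite and Theorem~\ref{theo:mayer} supplies a compact normal subgroup $M$ with $G/M=N\rtimes R$, where $N$ is connected nilpotent Lie and $R$ is connected linear reductive acting on $N$ without non-trivial fixed points. It suffices to force $N$ to be trivial and $R$ to be compact, for then $G/M$, and hence $G$, is compact. Compactness of $R$ is immediate: $\fal(G)$ is a spatially completely complemented ideal, so by property~(b) it is operator amenable and $G$ is amenable by Theorem~\ref{theo:johruan}; every quotient group of $G$ is therefore amenable, and $R$, a quotient of $G/M$, is an amenable connected linear reductive group, hence compact.

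With $R$ compact the indexing family of Remark~\ref{rem:FSconnmin} collapses to $\fN_R(N)$, and in the resulting decomposition of $\fsal(G)$ the summand indexed by the trivial subgroup is the Rajchman algebra $\fsal_0(G)$, a closed completely complemented ideal. By property~(b), operator amenability of $\fsal(G)$ forces $\fsal_0(G)$ to be operator amenable, and in particular to admit a bounded approximate identity. The crux of the proof --- and the step I expect to be the real obstacle --- is to show that this fails whenever $N$ is non-trivial, i.e.\ that for a non-compact connected $G$ of the present form $\fsal_0(G)$ carries no bounded approximate identity. This is exactly where connectedness is indispensable, and where the contrast with Theorem~\ref{theo:rundes} lives: because $N$ has trivial maximal compact subgroup, $\exp\colon\ffn\to N$ is a homeomorphism and the $R$-orbits in $\what{N}$ carrying the infinite-dimensional representations of $N\rtimes R$ run over a continuous parameter, so those representations are not square-integrable and $\fal(G)\subsetneq\fsal_0(G)$ strictly; by contrast, for $G_p=\Que_p\rtimes\Oh_p^\times$ the orbits are compact-open, the representations are square-integrable, $\fsal_0(G_p)=\fal(G_p)$, and a bounded approximate identity does exist.

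To make the obstruction effective I would first pass to the operator-amenable quotient $\fsal(G)/\ideal^{\tau}$, for $\tau$ the topology associated with $\wtil{[N,N]}$, reducing to the case $N\cong\Ree^m$ abelian (with $m\geq 1$ when $N$ is non-trivial) and $G/M$ a Euclidean-motion-type group $\Ree^m\rtimes R$. Restricting coefficients to the normal subgroup $\Ree^m$ then realises the $R$-invariant measure algebra on $\Ree^m$ as a complete-quotient homomorphic image of $\fsal(G)$; this algebra contains the continuous orbit measures and so is non-discrete, and the remaining task is to rule out operator amenability of such a non-discrete invariant measure algebra, an assertion of the same nature as the measure-algebra dichotomy of Theorem~\ref{theo:dalsgh}(ii). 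Once the no-approximate-identity statement (equivalently, non-operator-amenability) is secured, $m=0$, so $N$ is trivial, $G/M=R$ is compact, and therefore $G$ is compact.
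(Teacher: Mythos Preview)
Your reduction to $\wbar{G}=N\rtimes K$ with $K$ compact, via Theorems~\ref{theo:zeEcfin} and~\ref{theo:mayer} together with amenability of $G$, is correct and matches the paper. The genuine gap is the final obstruction: you acknowledge that ruling out a bounded approximate identity for the $K$-invariant Rajchman measure algebra on the Euclidean piece is ``the remaining task'', but gesturing at Theorem~\ref{theo:dalsgh} does not close it. That theorem concerns the full measure algebra $\bm(H)$, and its proof---via the Wiener--Pitt phenomenon and point derivations on the spectrum---does not transfer to the much smaller commutative algebra $\bm_0(\what{V})^K$, which in fact admits no point derivations. The paper's decisive ingredient, which you are missing, is Ragozin's theorem~\cite{ragozin}: for a compact connected group $K$ acting \emph{irreducibly} on a real vector space $V$, the $(\dim V)$-fold convolution of $K$-invariant Rajchman measures on $\what{V}$ is absolutely continuous. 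Since $K$-orbital measures lie in $\bm_0(\what{V})^K\setminus\bl^1(\what{V})$, a bounded approximate identity $(w_\alp)$ for $\fsal_0(V)^K$ would yield another one $(w_\alp^{\dim V})\subset\fal(V)$, and these cannot approximate any element outside the closed ideal $\fal(V)$; contradiction.

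Two smaller points. First, to invoke Ragozin the paper does not quotient by $[N,N]$ as you propose, but instead selects a \emph{minimal} $K$-invariant subspace $V$ inside the last term $N_d$ of the descending central series of $N$, precisely so that the $K$-action is irreducible; your $\Ree^m=N/[N,N]$ would still require a further pass to an irreducible subrepresentation. Second, restriction of $\fsal(G)$ to the abelian normal subgroup does not directly realise the $K$-\emph{invariant} measure algebra as a complete quotient: by Cowling--Rodway it yields $\fsal^G(\wtil{V})$, the elements on which conjugation acts continuously. One must then average over cosets of the compact part $W\subset\wtil{V}$ and over $K$ to reach $\fsal_0(V)^K$. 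Averaging is not an algebra homomorphism, but since the target consists of $K$-fixed vectors the bounded approximate identity survives each averaging, and that is all the argument requires.
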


\begin{proof}
We recall Ruan's result, Theorem \ref{theo:johruan} (i) $\Leftrightarrow$ (iii): $\fal(G)$
is operator amenable exactly when $G$ is amenable.  

Hence if $G$ is compact,  $\fsal(G)=\fal(G)$ is operator amenable.  

Now, and for the rest of the proof, assume that $\fsal(G)$ is operator amenable.
We shall liberally appeal to the operator amenability properties discussed in Section \ref{ssec:bandn}.
Then we have that the completely spatially complemented ideal $\fal(G)$ is operator amenable, so $G$
is amenable.  By Theorems \ref{theo:zeEcfin} and \ref{theo:mayer} we see that
for some compact normal subgroup $M$ of $G$, $G/M=N\rtimes R$, where
$R$ is hence a quotient of $G$.  An amenable reductive group is necessarily compact
(see, for example, \cite[3.3.2]{greenleafB}).  Thus we have that
\[
\wbar{G}=G/M=N\rtimes K
\]
where $K$ is a compact connected matrix group acting on a connected nilpotent Lie group
with no non-trivial fixed points.  We shall show that $N$ must be trivial.  To the contrary, let us suppose it is not.

We let $N=N_1\supsetneq\dots\supsetneq N_d\supsetneq\{e\}$ be the descending central series.
Hence $N_d\subseteq Z(N)$.  As noted in Remark \ref{rem:FSconnmin}, $N_d$ admits no
non-trivial compact subgroups, so $N_d$ is a vector group.  
We let $V\subseteq N_d$ be of minimal dimension amongst non-trivial $K$-invariant connected subspaces
of $N_d$, so $K$ acts on $V$ irreducibly.    We let $\wtil{V}$ be defined just as in (\ref{eq:tS}).
Just as is shown in the paragraph immediately following (\ref{eq:tS}), $\wtil{V}$ is a nilpotent
group with descending central series $\wtil{V}=\wtil{V}_1\supsetneq\wtil{V}_2\supsetneq\{e\}$.  We have that
\[
V=\wtil{V}/(\wtil{V}\cap M)=(\wtil{V}/\wtil{V}_2)/[(\wtil{V}\cap M)/\wtil{V}_2]
\]
so the connected abelian group $\wtil{V}/\wtil{V}_2$ admits structure $V\times T'$, where $T'$ is compact.
But then $W=q_W^{-1}(T')$ is the maximal compact normal subgroup of $\wtil{V}$, thus characteristic in $\wtil{V}$,
and hence central in $G$ [as connected $G$ acts on it trivially, by conjugation].
Moreover, we see that $\wtil{V}$
is a central extension of $V$: $\wtil{V}/W=V$.

We let 
\[
\fsal^G(\wtil{V})=\{u\in\fsal(G)\,|\,x\mapsto u(x\cdot x^{-1}):G\to\fsal(\wtil{V})\text{ is continuous}\}
\]
By Cowling and Rodway~\cite{cowlingr} we see that $B^G(\wtil{V})=\fsal(G)|_{\wtil{V}}$.
This algebra, necessarily being a complete quotient of operator amenable
$\fsal(G)$ (the adjoint of the restriction map
is an injective $*$-homomorphism, hence a complete isometry) is also operator amenable.
We let
\[
\fsal_0^G(\wtil{V})=\fsal_0(\wtil{V})\cap \fsal^G(\wtil{V}).
\]
This, being a completely spatially complemented ideal of $\fsal^G(\wtil{V})$, 
must also be operator amenable.  In particular
\[
\fsal_0^G(\wtil{V})\text{ admits a bounded approximate identity }(u_\alp).
\]
This is the only fact about $\fsal_0^G(\wtil{V})$, which we shall use for the remainder of the proof.

Let $\wbar{K} =G/Z_{\wtil{V}}(G)$, where $Z_{\wtil{V}}(G)$ is the centralizer of $\wtil{V}$
in $G$.  Since $W$ is central in $G$ we see that $\wbar{K}$ acts on $\wtil{V}/W$
just as $K$ acts on $V$.  Hence $\wbar{K}$ is isomorphic to the image of $K$ in $\mathrm{GL}(V)$.
It then follows that
\begin{equation}\label{eq:keyidentity}
\fsal_0^G(\wtil{V}:W)=\fsal^K_0(V)
\end{equation}
where the first space is the subspace of $\fsal_0^G(\wtil{V})$ of functions constant on cosets.

The map $v\mapsto m_W\ast v:\fsal_0^G(\wtil{V})\to \fsal_0^G(\wtil{V}:W)$, 
of averaging $v$ over cosets is an expectation onto its co-domain.  Hence if
$v_\alp=m_W\ast u_\alp$, then $(v_\alp)$ is a bounded approximate identity
for $\fsal_0^G(\wtil{V}:W)$.  Indeed, for $v$ in $\fsal_0^G(\wtil{V}:W)$,
$v_\alp v=m_W\ast(u_\alp v)$ tends to $m_W\ast v=v$.
Hence it follows (\ref{eq:keyidentity}) that $\fsal_0^K(V)$ admits a bounded approxmite
identity $(v_\alp)$.  Likewise, let $M_Kv=\int_K v(k\cdot)\,dk$, and $w_\alp=M_Kv_\alp$
forms a bounded approximate identity for the fixed point space 
\[
\fsal_0(V)^K=\{w\in \fsal_0(V):w(k\cdot)=w\text{ for each }k\text{ in }K\}.
\]

The Fourier-Steiltjes transform provides an isomorphism $\fsal_0(V)^K\cong\bm_0(\what{V})^K$,
where the latter space is the space of Rajchman measures on the dual $\what{V}$ fixed by the adjoint action of $K$.
Since $K$ acts irreducibly on $V$, hence on $\what{V}$, it is a result of Ragozin~\cite{ragozin} 
that
\[
[\bm_0(\what{V})^K]^{\ast\dim V}\subseteq \bl^1(\what{V}).
\]
Hence we see that
\begin{equation}\label{eq:powerfpalg}
[\fsal_0(V)^K]^{\dim V}\subseteq \fal(V).
\end{equation}
However, the basic $K$-invariant orbital measures $\delta_{\hat{v}\cdot K}=w^*\text{-}\int_K\delta_{\hat{v}\cdot k}\,dk$,
for $\hat{v}$ in $\what{V}\setminus\{0\}$,
are in $\bm_0(\what{V})^K\setminus \bl^1(\what{V})$, which shows that $\fsal_0(V)^K\supsetneq\fal(V)$.
Hence (\ref{eq:powerfpalg}) contradicts the existence of a bounded approximate identity for $\fsal_0(V)^K$,
whence for $\fsal_0^G(\wtil{V})$.

This contradiction shows that we cannot have that $\fsal(G)$ is operator amenable with $N$
non-trivial.  
\end{proof}

\begin{remark}
For groups $G$ with Lie quotients $N\rtimes K$, as discussed in the proof of Theorem \ref{theo:BGoaGconn},
$\fsal(G)$ should admit no point derivations.  Compare \cite[\S 3]{liukkonenm1} with the fact
that each $\fal(G/S)$ admits no point derivations.   

Given (\ref{eq:FSminconn}) and it analogy for $G$, 
we highly suspect that for a connected minimal group, $G$ a compact extention of $N\rtimes R$, 
$\fsal(G)$ will not admit point derivations either.

Furthermore, many of the the constituent components $B_0(G/S)\circ q_S$ are 
not square dense in themselves, hence not operator weakly amenable.  However, as mentioned before,
we do not no of a means to show that $\fsal(G)$ itself is not operator weakly amenable.
\end{remark}

We finish with one final observation.

\begin{corollary}
For any locally compact group $G$, $\fsal(G)$ is operator amenable  only if the connected component
of the identity $G_0$ is compact.  In particular, if $G$ is almost connected, then
$\fsal(G)$ is operator amenable if and only if $G$ is compact.
\end{corollary}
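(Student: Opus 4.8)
The plan is to reduce to the connected case, Theorem~\ref{theo:BGoaGconn}, by passing to the connected component of the identity $G_0$, a closed characteristic (in particular normal) subgroup of $G$. First I would record that operator amenability of $\fsal(G)$ passes, through property~(b), to its completely spatially complemented ideal $\fal(G)$, whence Ruan's theorem (Theorem~\ref{theo:johruan}, (i)$\Leftrightarrow$(iii)) makes $G$, and with it the closed subgroup $G_0$, amenable. The goal is then to transport operator amenability down onto $G_0$ and invoke Theorem~\ref{theo:BGoaGconn} to force $G_0$ compact.

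The transport is effected by restriction. The map $r\colon\fsal(G)\to\fsal(G_0)$, $r(u)=u|_{G_0}$, is a completely contractive homomorphism, and by the theorem of Cowling and Rodway~\cite{cowlingr} (already used in the proofs above) its range is the $G$-conjugation-continuous subalgebra $\fsal^G(G_0)=\fsal(G)|_{G_0}$. Exactly as in the proof of Theorem~\ref{theo:BGoaGconn}, the adjoint of $r$ is the inclusion into $\wstar(G)$ of the von Neumann algebra generated by $\vpi(G_0)$, an injective $*$-homomorphism and hence a complete isometry; thus $r$ is a complete quotient homomorphism onto $\fsal^G(G_0)$, and by property~(a) this algebra is operator amenable.

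The main obstacle is that $\fsal^G(G_0)$ can be a \emph{proper} subalgebra of $\fsal(G_0)$, since restriction to a non-central normal subgroup retains only the conjugation-continuous matrix coefficients, so Theorem~\ref{theo:BGoaGconn} does not apply to $G_0$ as a black box. To close this gap I would re-run the concluding Ragozin argument of that theorem inside restriction algebras of $\fsal(G)$ itself. Assuming $G_0$ non-compact, the structure theory (Theorems~\ref{theo:zeEcfin} and~\ref{theo:mayer}) should give $G_0/M=N\rtimes K$ with $K$ compact and $N\neq\{e\}$; because $G_0$ is characteristic, the $K$-irreducible vector subgroup $V\subseteq Z(N)$ and its central extension $\wtil{V}$ may be chosen normal in $G$, so that $\fsal_0^G(\wtil{V})=\fsal(G)|_{\wtil{V}}\cap\fsal_0(\wtil{V})$ is a completely complemented ideal of the operator amenable algebra $\fsal(G)|_{\wtil{V}}$, hence has a bounded approximate identity, and the averaging-plus-Ragozin~\cite{ragozin} argument yields the same contradiction. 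The genuinely delicate points, which I expect to be where the real work lies, are (i) establishing that $\ee(G_0^\vpi)$ is finite---so that Theorem~\ref{theo:mayer} is available for $G_0$---starting only from finiteness of $\ze(G^\vpi)$, since the restriction morphism $G^\vpi\to\mathrm{spec}(\fE^G(G_0))$ need not recover all of $G_0^\vpi$; and (ii) verifying that the conjugation-continuous algebras $\fsal^G(\wtil{V})$ carry the bounded approximate identities the endgame requires.

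Finally, for the ``in particular'' clause: if $G$ is almost connected then $G/G_0$ is compact, and together with the compactness of $G_0$ just established this forces $G$ to be an extension of a compact group by a compact group, hence compact. Conversely, if $G$ is compact then $G$ is amenable and $\fsal(G)=\fal(G)$, which is operator amenable by Theorem~\ref{theo:johruan}.
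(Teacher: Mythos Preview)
Your central ``obstacle'' is illusory: for the connected component $G_0$ specifically, the restriction map $\fsal(G)\to\fsal(G_0)$ is \emph{surjective} onto all of $\fsal(G_0)$, not merely onto the conjugation-continuous subalgebra $\fsal^G(G_0)$. This is a result of Liukkonen and Mislove~\cite[Prop.~1.1]{liukkonenm} (stated there for positive definite functions, which suffices by polarization). The Cowling--Rodway description $\fsal(G)|_H=\fsal^G(H)$ is what one has for a general closed subgroup $H$, but $G_0$ is special. Once you know surjectivity, the adjoint is an injective $*$-homomorphism $\wstar(G_0)\hookrightarrow\wstar(G)$, hence a complete isometry, so restriction is a complete quotient homomorphism; property~(a) then makes $\fsal(G_0)$ itself operator amenable, and Theorem~\ref{theo:BGoaGconn} applies directly to force $G_0$ compact. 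No re-running of the Ragozin argument is needed, and the appeal to amenability of $G$ is also unnecessary.

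Your proposed workaround, by contrast, is genuinely incomplete. The difficulty you label~(i) is real and you do not resolve it: operator amenability of $\fsal(G)$ gives finiteness of $\ze(G^\vpi)$ via Theorem~\ref{theo:zeEcfin}, but there is no obvious mechanism to transfer this to finiteness of $\ee(G_0^\vpi)$, since the restriction morphism on spectra need not be surjective onto $G_0^\vpi$. Without that, Theorem~\ref{theo:mayer} is unavailable for $G_0$, and the structural decomposition $G_0/M=N\rtimes K$ on which your entire endgame rests is unjustified. Moreover, even granting the structure, your choice of $V$ is $K$-irreducible, not $G$-irreducible, so there is no reason $\wtil V$ should be normal in $G$; your claim that it ``may be chosen normal in $G$'' because $G_0$ is characteristic does not follow. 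The paper's route avoids all of this by citing \cite{liukkonenm} and working with $\fsal(G_0)$ directly.
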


\begin{proof}
It is shown in \cite[Prop. 1.1]{liukkonenm} that the restriction map $u\mapsto u|_{G_0}:\fsal(G)\to\fsal(G_0)$
is surjective.  (It is literally stated restriction holds for positive definite functions, but every element 
of $\fsal(G_0)$ is a linear combination combination of $4$ positive definite functions.)  
The adjoint of restriction is an injective $*$-homomorphism form $\wstar(G_0)$ to $\wstar(G)$,  hence
a complete isometry.  Hence  restriction is a complete quotient map.  Thus if $\fsal(G)$
is operator amenable, we too must have that
$\fsal(G_0)$ is operator amenable, and we appeal to Theorem \ref{theo:BGoaGconn}.
\end{proof}

\section*{Acknowledgement}
The author is indebted to M.~Ghandehari, with whom he extensively
discussed some early approaches to proving Theorem \ref{theo:BGoaGconn}.



\begin{thebibliography}{30}



\bibitem{arsac}
G. Arsac.
Sur l'espace de Banach engendr\'{e} par les coefficients d'une repr\'{e}sentation unitaire. 
Publ. D\'{e}p. Math. (Lyon) 13 (1976), no. 2, 1--101.


\bibitem{blecher}
D. P. Blecher.
The standard dual of an operator space. 
Pacific J. Math. 153 (1992), no. 1, 15--30.

\bibitem{bouziadlm}
A. Bouziad, M. Lema\`{n}czyk and M. K. Mentzen.
A compact monothetic semitopological semigroup whose set of idempotents is not closed. 
Semigroup Forum 62 (2001), no. 1, 98--102.

\bibitem{cowling}
M. Cowling.
The Fourier-Stieltjes algebra of a semisimple group. 
Colloq. Math. 41 (1979), no. 1, 89--94.

\bibitem{cowlingr}
M. Cowling and P. Rodway.
Restrictions of certain function spaces to closed subgroups of locally compact groups. 
Pacific J. Math. 80 (1979), no. 1, 91--104. 

\bibitem{curtisl}
P. C. Curtis and R. J. Loy.
The structure of amenable Banach algebras. 
J. London Math. Soc. (2) 40 (1989), no. 1, 89--104. 

\bibitem{dalesgh}
H. G. Dales, F. Ghahramnai and A. Ya. Helemski\u{\i}.
The amenability of measure algebras. 
J. London Math. Soc. (2) 66 (2002), no. 1, 213--226. 

\bibitem{duncann}
J. Duncan and I. Namioka.
Amenability of inverse semigroups and their semigroup algebras. 
Proc. Roy. Soc. Edinburgh Sect. A 80 (1978), no. 3-4, 309--321. 


\bibitem{effrosrB}
E. G Effros and Z.-J. Ruan.
Operator spaces. London Mathematical Society Monographs. New Series, 23. 
The Clarendon Press, Oxford University Press, New York, 2000. 

\bibitem{elgun}
E. Elg\"{u}n.
West semigroups as compactifications of locally compact abelian groups. 
Semigroup Forum 93 (2016), no. 1, 71--85. 

\bibitem{eymard}
P. Eymard.
L'alg\`{e}bre de Fourier d'un groupe localement compact. 
Bull. Soc. Math. France 92, (1964), 181--236.

\bibitem{forrestkls}
B. E. Forrest, E. Kanuith, A. T.-M. Lau and N. Spronk.
Ideals with bounded approximate identities in Fourier algebras. 
J. Funct. Anal. 203 (2003), no. 1, 286?304. 

\bibitem{forrestr}
B. E. Forrest and V. Runde.
Amenability and weak amenability of the Fourier algebra. 
Math. Z. 250 (2005), no. 4, 731--744.

\bibitem{forrestw}
B. E. forrest and P. J. Wood.
Cohomology and the operator space structure of the Fourier algebra and its second dual. 
Indiana Univ. Math. J. 50 (2001), no. 3, 1217--1240. 




\bibitem{ghandehari}
M. Ghandehari.
Amenability properties of Rajchman algebras. 
Indiana Univ. Math. J. 61 (2012), no. 3, 1369--1392.

\bibitem{ghandeharihs}
M. Ghandehari, H. Hatami and N. Spronk.
Amenability constants for semilattice algebras. 
Semigroup Forum 79 (2009), no. 2, 279--297.

\bibitem{gierzhklms}
G. Gierz, K. H. Hofmann, L. Keimel, J. D. Lawson, M. Mislove and D. S. Scott.
Continuous lattices and domains. 
Encyclopedia of Mathematics and its Applications, 93. Cambridge University Press, 
Cambridge, 2003.

\bibitem{greenleafB}
F. P. Greenleaf.
Invariant means on topological groups and their applications. 
Van Nostrand Mathematical Studies, No. 16, Van Nostrand Reinhold Co., New York-Toronto, Ont.-London 1969

\bibitem{hilgertnB}
J. Hilgert and K.-H. Neeb.
Structure and geometry of Lie groups. 
Springer Monographs in Mathematics. Springer, New York, 2012.

\bibitem{hofmannmB}
K. H. Hofmann and S. A. Morris.
The Lie theory of connected pro-Lie groups. A structure theory for pro-Lie algebras, pro-Lie groups, and 
connected locally compact groups. 
EMS Tracts in Mathematics, 2. European Mathematical Society (EMS), Z\"{u}rich, 2007.


\bibitem{helemskii}
A. Ya. Helemski\u{\i}.
The homology of Banach and topological algebras. 
Translated from the Russian by Alan West. Mathematics and its Applications (Soviet Series), 41. 
Kluwer Academic Publishers Group, Dordrecht, 1989. 

\bibitem{johnson0}
B. E. Johnson.
Approximate diagonals and cohomology of certain annihilator Banach algebras. 
Amer. J. Math. 94 (1972), 685--698.

\bibitem{johnson}
B. E. Johnson.
Cohomology in Banach algebras. Memoirs of the American Mathematical Society, No. 127. American Mathematical Society, Providence, R.I., 1972.

\bibitem{johnson1}
B. E. Johnson.
Weak amenability of group algebras. 
Bull. London Math. Soc. 23 (1991), no. 3, 281--284.

\bibitem{laulw}
A. T.-M. Lau, R. J.  Loy and G. Willis.
Amenability of Banach and C*-algebras on locally compact groups. 
Studia Math. 119 (1996), no. 2, 161--178. 

\bibitem{leelss}
H. H. Lee, J. Ludwig, E. Samei and N. Spronk.
Weak amenability of Fourier algebras and local synthesis of the anti-diagonal. 
Adv. Math. 292 (2016), 11--41.

\bibitem{liukkonenm}
J. Liukkonen and M. W. Mislove.
Symmetry in Fourier-Stieltjes algebras. 
Math. Ann. 217 (1975), no. 2, 97?112.

\bibitem{liukkonenm1}
J. Liukkonen and M. W. Mislove.
Fourier-Stieltjes algebras of compact extensions of nilpotent groups. 
J. Reine Angew. Math. 325 (1981), 210?220.

\bibitem{mayer1}
M. Mayer.
Asymptotics of matrix coefficients and closures of Fourier-Stieltjes algebras. 
J. Funct. Anal. 143 (1997), no. 1, 42--54.

\bibitem{mayer2}
M. Mayer.
Strongly mixing groups. Semigroup Forum 54 (1997), no. 3, 303--316.


\bibitem{ragozin}
D. L. Ragozin.
Rotation invariant measure algebras on Euclidean space. 
Indiana Univ. Math. J. 23 (1973/74), 1139--1154.

\bibitem{ruan}
Z.-J. Ruan.
The operator amenability of A(G). 
Amer. J. Math. 117 (1995), no. 6, 1449--1474.

\bibitem{rundeB}
V. Runde.
Lectures on amenability. Lecture Notes in Mathematics, 1774. Springer-Verlag, Berlin, 2002.

\bibitem{rundes}
V. Runde and N. Spronk.
Operator amenability of Fourier-Stieltjes algebras. Math. 
Proc. Cambridge Philos. Soc. 136 (2004), no. 3, 675--686.

\bibitem{rundes1}
V. Runde and N. Spronk.
Operator amenability of Fourier-Stieltjes algebras. II. 
Bull. Lond. Math. Soc. 39 (2007), no. 2, 194--202.

\bibitem{ruppertB}
W. Ruppert.
Compact semitopological semigroups: an intrinsic theory. 
Lecture Notes in Mathematics, 1079. Springer-Verlag, Berlin, 1984.

\bibitem{samei}
E. Samei.
Bounded and completely bounded local derivations from certain commutative semisimple Banach algebras. 
Proc. Amer. Math. Soc. 133 (2005), no. 1, 229--238. 

\bibitem{spronk1}
N. Spronk.
Operator weak amenability of the Fourier algebra. 
Proc. Amer. Math. Soc. 130 (2002), no. 12, 3609--3617

\bibitem{spronk}
N. Spronk.
Topologies, idempotents and ideals.  Preprint.

\bibitem{spronks}
N. Spronk and R. Stokke.
Matrix coefficients of unitary representations and associated compactifications. 
Indiana Univ. Math. J. 62 (2013), no. 1, 99--148.


\end{thebibliography}
\end{document}